\documentclass[11pt,a4paper]{article}

\usepackage[numbers,sort&compress]{natbib}
\usepackage[colorlinks=true,linkcolor=blue,citecolor=magenta,urlcolor=blue]{hyperref}
\usepackage{amsmath,amssymb,amsfonts,mathrsfs,bm}
\usepackage{amsthm}
\usepackage{enumerate}
\usepackage[top=1.9cm,bottom=1.9cm,left=2.0cm,right=2.0cm]{geometry}
\usepackage{indentfirst}
\usepackage{setspace}

\newtheorem{theorem}{Theorem}[section]
\newtheorem{definition}[theorem]{Definition}

\newtheorem{lemma}[theorem]{Lemma}

\newtheorem{corollary}[theorem]{Corollary}
\newtheorem{conjecture}[theorem]{Conjecture}

\newcommand{\ex}{\operatorname{ex}}
\newcommand{\cl}{\mathcal K}

\newcommand{\dunion}{\mathbin{\dot\cup}}

\title{Large Cliques and Clique Spectral Radius in the Erd\H{o}s--S\'{o}s Problem}
\author{Xiaojun Zhao\thanks{School of Mathematics, Hunan University, Changsha 410082, P.R. China. Email: xiaojunzhao@hnu.edu.cn.}
\and Yuejian Peng\thanks{Corresponding author. School of Mathematics, Hunan University, Changsha 410082, P.R. China. Email: ypeng1@hnu.edu.cn.}}

\begin{document}
\setlength{\baselineskip}{14pt}
\maketitle

\begin{abstract}
For graphs \(H\) and \(F\), let \(\ex(n,H,F)\) be the maximum number of copies of \(H\) in an \(n\)-vertex \(F\)-free graph. We study this problem when \(H\) is a clique and \(F=T_t\) which is a fixed tree on \(t\) vertices. The Erd\H{o}s--S\'{o}s conjecture concerns the value of \(\ex(n,K_2, T_t)\). Gerbner and Palmer\cite{GP2026} proposed a more general conjecture: if $n=\alpha(t-1)+\beta$ and $0\le\beta\le t-2$, then the graph \(\alpha K_{t-1}\dunion K_\beta\) maximizes the number of \(r\)-cliques among all $n$-vertex $T_t$-free graphs for every \(3\le r\le t-2\).  We show that this conjecture holds for $T_t$ having at least $t-r$ leaves with a common parent, which contains the star case as a special case and recovers the sharp clique-counting result conjectured by Gan, Loh and Sudakov~\cite{gan2015maximizing} and proved by Chase~\cite{chase2020} and Chao and Dong~\cite{Chao2022}.  We also study the clique-spectral analogue. Under the same leaf-bunch condition, every $T_t$-free graph $G$ satisfies $\rho_r(G)\le\binom{t-2}{r-1}$, with equality, for $n\ge t-1$, if and only if $K_{t-1}$ is a component of $G$. Furthermore,
 we prove the conjecture for \(r=t-d\) whenever \(d\ge2\) and \(t\ge d^2-d+3\), while the case \(d=1\) is determined exactly for every \(t\). For \(d\ge2\) and \(t\ge d^2-d+3\), every \(T_t\)-free graph \(G\) satisfies \(\rho_{t-d}(G)\le\rho_{t-d}(K_{t-1})\), with equality characterized by the presence of a \(K_{t-1}\)-component.

 Our method is designed for relatively large cliques. In the leaf-poor case, after deleting edges that lie in no  $(t-d)$-clique, we study the intersection relation among \((t-d)\)-cliques and show that its equivalence classes induce the nontrivial clique-supported components; furthermore, we show each non-trivial component has at most \(t-1\) vertices. In the complementary leaf-rich case, a leaf-bunch criterion reduces the clique-counting problem to the sharp bounded-maximum-degree clique theorem.

\medskip
\noindent\emph{Keywords:} Erd\H{o}s--S\'{o}s conjecture; trees; cliques; clique spectral radius.
\end{abstract}

\section{Introduction}\label{intro}

All graphs considered in this paper are simple and undirected.
For a graph $G$, let $V(G)$ and $E(G)$ denote its vertex set and edge set, respectively.
For $S\subseteq V(G)$, let $G[S]$ denote the subgraph of $G$ induced by $S$.
For a vertex $u$ of $G$, the neighborhood of $u$ in $G$ is denoted by $N_G(u)$, or simply $N(u)$.
Let $P_t$, $K_t$, and $S_t$ denote the path, the complete graph, and the star on $t$ vertices, respectively. A copy of $K_r$ is also called an {\em $r$-clique}.
A {\em component} of $G$ is a maximal connected subgraph of $G$.

For pairwise vertex-disjoint graphs \(G_1,\ldots,G_k\), we write
$G_1\dunion\cdots\dunion G_k$ for their disjoint union. We also write \(mG\) for the disjoint union of \(m\) copies of \(G\). For example, \(\alpha K_{t-1}\dunion K_\beta\) consists of the vertex-disjoint union of \(\alpha\) copies of \(K_{t-1}\) and one copy of \(K_\beta\).

For a graph $G$ and a graph $F$,
$G$ is called {\em $F$-free} if it does not contain an isomorphic copy of $F$ as a subgraph. For an integer $n$ and graphs $H$ and $F$, let $\ex(n,H,F)$ denote the maximum number of copies of $H$ that an $n$-vertex $F$-free graph can contain; this number is called the {\em generalized Tur\'{a}n number}. An $n$-vertex $F$-free graph containing $\ex(n,H,F)$ copies of $H$ is called an {\em extremal graph}.
When $H=K_{2}$, this is the usual Tur\'{a}n number. The study of Tur\'{a}n numbers is inspired by Tur\'{a}n~\cite{t1941}. Let $T_r(n)$ denote the complete $r$-partite graph on $n$ vertices whose part sizes are as equal as possible. Tur\'{a}n's Theorem \cite{t1941} says that if $G$ is a $K_{r+1}$-free graph on $n$ vertices, then $e(G)\le e(T_r(n))$, with equality if and only if $G=T_r(n)$. A fundamental result of Erd\H{o}s, Stone and Simonovits \cite{ES46, ES66} gives that $\ex(n,K_2,F)=e(T_r(n))+o(n^2)$, where $\chi(F)=r+1\ge3$. This determines the asymptotic value of $\ex(n,K_2,F)$ for every non-bipartite graph $F$. Thus a remaining challenge is to determine Tur\'{a}n numbers of bipartite graphs.

Determining the Tur\'{a}n number of a tree is an important part of this theory. Clearly, every graph with $n$ vertices and more than $\frac{n(t-2)}{2}$ edges contains a star with $t$ vertices.
Erd\H{o}s and S\'{o}s \cite{erdos1962} in 1962 conjectured that every graph with $n$ vertices and more than $\frac{n(t-2)}{2}$ edges contains every tree with $t$ vertices as a subgraph.
The conjectured bound is tight for every $t\in\mathbb N$, as seen by considering a graph consisting of a vertex-disjoint union of copies of $K_{t-1}$ if $t-1$ divides $n$.
 The Erd\H{o}s--Gallai theorem \cite{1959Erdos} shows that the conjecture is true for a path.
 Ajtai, Koml\'{o}s and Szemer\'{e}di \cite{ajtai1994} announced a proof of the Erd\H{o}s--S\'{o}s conjecture for large $t$, and notable works (for example, see \cite{Brandt1996,goerlich2016,rozhon2019,besomi2019,Fan2013,Fan2007,Fan2019,sacle1997}) have verified the Erd\H{o}s--S\'{o}s conjecture in various special cases, but the conjecture remains widely open.

The study of generalized Tur\'{a}n numbers
 was initiated by Zykov \cite{zykov1949}, who determined $\ex(n,K_{r},K_{t})$ exactly, and
 a systematic study of the generalized Tur\'{a}n problem $\ex(n,H,\mathcal{F})$ was given by Alon and Shikhelman in \cite{alon2016}. For a recent survey of generalized Tur\'{a}n problems, see Gerbner and Palmer \cite{GP2026}.

For the path,
Luo \cite{luo2018maximum} generalized the Erd\H{o}s--Gallai theorem and proved $\ex(n,K_{r},P_{t})\leq \frac{n}{t-1}\binom{t-1}{r}$.
Chakraborti and Chen \cite{chakraborti2024exact} strengthened Luo's result by obtaining the exact value for every $n$ and characterizing all extremal graphs.

 For the star, Cutler and Radcliffe \cite{cutler2014maximum} determined the maximum number of complete subgraphs in $S_{t}$-free graphs $G$ on $n$ vertices and characterized all extremal graphs.
A related question is to determine $\ex(n,K_r,S_t)$.
Several works provide values for small $n$; see \cite{engbers2014counting,alexander2012,alexander2016,Law2013}.
Gan, Loh and Sudakov \cite{gan2015maximizing} conjectured that, if $n=\alpha(t-1)+\beta$ with $0\leq\beta\leq t-2$, then
\[
\ex(n,K_r,S_t)= \alpha\binom{t-1}{r}+\binom{\beta}{r}.
\]
Chase \cite{chase2020} confirmed the conjecture by showing that it holds for $r=3$ and combining the result of Gan, Loh and Sudakov \cite{gan2015maximizing} that the conjecture holds for $r+1$ if it holds for $r$.  Chao and Dong \cite{Chao2022} later gave a unified proof for all clique orders $r\ge3$.

 Paths and stars have special structures. It is therefore natural to ask the same question for a general tree. In 2026, Gerbner and Palmer \cite{GP2026} proposed the following general conjecture for $r$-cliques with $3\leq r\leq t-2$. Independently, we had formulated the same conjecture in an unpublished first draft of this paper in May 2025.

\begin{conjecture}\label{C1}[Gerbner and Palmer \cite{GP2026}]
Let $\alpha\geq 1, 0\leq \beta\leq t-2, 3\leq r\leq t-2$ be integers and $n=\alpha (t-1)+\beta$.
    For each $t$-vertex tree $T_{t}$, we have
\[
\ex(n,K_r,T_t)=\alpha\binom{t-1}{r}+\binom{\beta}{r}.
\]
\end{conjecture}

The graph \(\alpha K_{t-1}\dunion K_\beta\) is \(T_t\)-free and therefore gives $\ex(n,K_r,T_t)\ge \alpha\binom{t-1}{r}+\binom{\beta}{r}$.

When we were working on the problem in \cite{ZP1} of determining $\ex(n,K_3,\{C_5,C_{10},C_{15},C_{20},\ldots\})$, we realized that the method designed for counting the maximum number of copies of $K_3$ in a graph forbidding cycles whose lengths are divisible by $5$ might be modified to determine $\ex(n,K_r,T_t)$ for large $r$ and a $t$-vertex tree $T_t$. In the first draft of this paper, written in May 2025, we formulated Conjecture~\ref{C1} independently and proved the case $r=t-2$. We subsequently extended our argument to the cases $r=t-3$ and $r=t-4$. These drafts were not posted online, since we continued working toward more general results using our method. Very recently, Zhou and Yuan~\cite{ZhouYuan2026} posted an independent proof of Conjecture~\ref{C1} for $r=t-2$ and for $r=t-3\ge5$, together with a characterization of all extremal graphs.  In the current version of this paper, we determine $\ex(n,K_{t-d},T_t)$ for $d=O(\sqrt{t})$ and characterize all extremal graphs (Theorem~\ref{T5}). We also show that conjecture holds for an $t$-vertex tree having at least $t-r$ leaves with a common parent.

In addition to the known results on paths and stars, Gerbner, Methuku, and Palmer \cite{GMP2020} proved that if the Erd\H{o}s--S\'{o}s conjecture holds for \(T_t\) and all its subtrees, then
\[
\ex(n,K_r,T_t)\le \frac{n}{t-1}\binom{t-1}{r}
\]
for \(3\le r\le t-3\). This bound is also sharp when \((t-1)\mid n\). Gerbner \cite{gerbner2023doublestars} also showed that the conjecture holds for double stars. Letzter \cite{letzter2019} determined the order of magnitude of \(\ex(n,H,T)\) for every fixed graph \(H\) and tree \(T\).

 We show that this conjecture holds for $T_t$ having at least $t-r$ leaves with a common parent, which contains the star case as a special case.   We also study the clique-spectral analogue. Under the same leaf-bunch condition, every $T_t$-free graph $G$ satisfies $\rho_r(G)\le\binom{t-2}{r-1}$, with equality, for $n\ge t-1$, if and only if $K_{t-1}$ is a component of $G$. Furthermore,
 we prove the conjecture for \(r=t-d\) whenever \(d\ge2\) and \(t\ge d^2-d+3\), while the case \(d=1\) is determined exactly for every \(t\). We show that for \(d\ge2\) and \(t\ge d^2-d+3\), every \(T_t\)-free graph \(G\) satisfies \(\rho_{t-d}(G)\le\rho_{t-d}(K_{t-1})\), with equality characterized by the presence of a \(K_{t-1}\)-component. Let us state our results precisely in the following two subsections.

\subsection{Clique-counting results}\label{counting-results}

The case $r=t-1$ is straightforward, and we include the proof for completeness. We first observe the following.

\begin{lemma}\label{isolation}
If a \(T_t\)-free graph contains a copy \(Q\) of \(K_{t-1}\), then \(Q\) is a component.
\end{lemma}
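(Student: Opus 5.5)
We argue by contradiction. Suppose $G$ is $T_t$-free, contains a copy $Q$ of $K_{t-1}$, and $Q$ is not a component of $G$. Since $Q$ is connected, the only way it can fail to be a component is that some vertex $v\notin V(Q)$ is adjacent to some vertex $u\in V(Q)$. We will then embed $T_t$ into the subgraph $Q+uv$ on the $t$ vertices $V(Q)\cup\{v\}$, which contradicts $T_t$-freeness.

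The embedding goes as follows. Every tree on $t\ge 2$ vertices has a leaf; choose a leaf $\ell$ of $T_t$ and let $p$ be its unique neighbour. The tree $T_t-\ell$ has $t-1$ vertices, so it embeds into the complete graph $Q\cong K_{t-1}$ via any bijection $\phi\colon V(T_t-\ell)\to V(Q)$. We choose the bijection with $\phi(p)=u$, which is possible because $Q$ is complete, so every bijection preserves adjacency. We then extend the map by setting $\phi(\ell)=v$.

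It remains to check that this extension is a valid embedding. The only new tree edge is $\ell p$, and it is sent to $vu\in E(G)$. The map is injective because $v\notin V(Q)$. Hence $G$ contains a copy of $T_t$, a contradiction. The degenerate case $t=1$ does not arise, since then $K_{t-1}$ would be empty; for $t=2$ the argument still works with $Q$ a single vertex.

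I do not expect a real obstacle. The only point needing care is using the completeness of $Q$ to choose the bijection so that the parent of the chosen leaf lands exactly on the endpoint $u$ of the outgoing edge.
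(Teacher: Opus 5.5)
Your proposal is correct and is essentially the paper's argument: take an edge $uv$ leaving $Q$, embed $T_t-\ell$ into $Q$ with the leaf's parent $p$ sent to $u$ (possible since $Q$ is complete), and send $\ell$ to $v$. You additionally spell out why this is the only way $Q$ can fail to be a component and check the small case $t=2$; neither check changes the approach.
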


\begin{proof}
Let \(\ell\) be a leaf of \(T_t\) and let \(p\) be its neighbor. If \(xy\in E(G)\) with \(x\in V(Q)\) and \(y\notin V(Q)\), embed \(T_t-\ell\) into \(Q\) with \(p\) mapped to \(x\), and map \(\ell\) to \(y\). This gives a copy of \(T_t\), a contradiction.
\end{proof}

\begin{theorem}\label{T1}
Let \(T_t\) be a \(t\)-vertex tree. Then
\[
\ex(n,K_{t-1},T_t)=\left\lfloor\frac{n}{t-1}\right\rfloor.
\]
The extremal graphs are exactly
\[
\left\lfloor\frac{n}{t-1}\right\rfloor K_{t-1}\dunion H,
\]
where \(H\) is any graph on \(n\bmod(t-1)\) vertices.
\end{theorem}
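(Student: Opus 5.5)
The proof rests on Lemma~\ref{isolation}: every copy of \(K_{t-1}\) in a \(T_t\)-free graph \(G\) is an entire component of \(G\). In particular, two distinct copies of \(K_{t-1}\) are distinct components, so they are vertex-disjoint. A component that is isomorphic to \(K_{t-1}\) contains exactly one copy of \(K_{t-1}\), and every other component contains none. Hence the number of \((t-1)\)-cliques in \(G\) equals the number of components of \(G\) isomorphic to \(K_{t-1}\). These components are disjoint and each has \(t-1\) vertices, so their number is at most \(\lfloor n/(t-1)\rfloor\). This gives the upper bound.

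For the lower bound, take \(G=\lfloor n/(t-1)\rfloor K_{t-1}\dunion H\), where \(H\) is any graph on \(n \bmod (t-1)\) vertices. Every component of \(G\) has at most \(t-1\) vertices, so \(G\) cannot contain the connected \(t\)-vertex graph \(T_t\). Thus \(G\) is \(T_t\)-free and has \(\lfloor n/(t-1)\rfloor\) copies of \(K_{t-1}\), so it is extremal.

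For the characterization, let \(G\) be extremal, so it has exactly \(\lfloor n/(t-1)\rfloor\) components isomorphic to \(K_{t-1}\). The remaining vertices number \(n-(t-1)\lfloor n/(t-1)\rfloor = n \bmod (t-1)\). No edge joins them to the \(K_{t-1}\)-components, since those are components. So they induce a graph \(H\) on \(n \bmod (t-1)\) vertices with \(G=\lfloor n/(t-1)\rfloor K_{t-1}\dunion H\). Conversely, every graph of this form was shown to be \(T_t\)-free and extremal above.

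There is no real obstacle here. The only point to check is that \(H\) may be arbitrary, which holds because \(|V(H)|<t-1<t\), so no component of \(H\) can host \(T_t\).
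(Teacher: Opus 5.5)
Your proposal is correct and follows the paper's route: Lemma~\ref{isolation} makes every copy of \(K_{t-1}\) a whole component, which gives the bound \(\lfloor n/(t-1)\rfloor\) and forces the extremal structure. You also spell out what the paper leaves implicit, namely that the construction is \(T_t\)-free because every component has fewer than \(t\) vertices, and that this is exactly why \(H\) may be arbitrary.
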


\begin{proof}

 By Lemma \ref{isolation}, the number of \((t-1)\)-cliques is at most \(\lfloor n/(t-1)\rfloor\). Thus, an extremal graph consists of $\lfloor n/(t-1)\rfloor$ copies of $K_{t-1}$ as components, together with an arbitrary graph on the remaining $n\bmod(t-1)$ vertices.
\end{proof}

\begin{definition}\label{leaf-bunch-parameter}
For a tree $T$, define
\[
\lambda(T):=\max_{v\in V(T)}|\{u\in N_T(v):d_T(u)=1\}|.
\]
Thus $\lambda(T)$ is the maximum number of leaves having a common parent.
\end{definition}

The following theorem isolates the clique-counting consequence of a large leaf bunch. Its clique-spectral counterpart is stated in Subsection~\ref{spectral-results}; the two theorems are proved together in Section~\ref{main-proof}.

\begin{theorem}\label{leaf-bunch-counting-theorem}
Let $T_t$ be a $t$-vertex tree and $3\le r\le t-1$. Suppose that $T_t$ has at least $t-r$ leaves with a common parent.
Let
\[
n=\alpha(t-1)+\beta,
\qquad \alpha\ge0,
\qquad 0\le\beta\le t-2.
\]
Then
\begin{equation}\label{leaf-bunch-counting}
\ex(n,K_r,T_t)
=
\alpha\binom{t-1}{r}+\binom{\beta}{r}.
\end{equation}
The extremal graphs are exactly
\[
G=\alpha K_{t-1}\dunion H,
\]
where $H=K_\beta$ if $\beta\ge r$, while $H$ is an arbitrary graph on $\beta$ vertices if $\beta<r$.
\end{theorem}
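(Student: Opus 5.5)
The plan is to reduce to the sharp bounded-maximum-degree clique theorem (Chase; Chao--Dong): every $n$-vertex graph with maximum degree at most $t-2$ has at most $\alpha\binom{t-1}{r}+\binom{\beta}{r}$ copies of $K_r$. The extremal graphs are exactly $\alpha K_{t-1}\dunion H$ with $H=K_\beta$ when $\beta\ge r$, and $H$ arbitrary on $\beta$ vertices when $\beta<r$. The lower bound in \eqref{leaf-bunch-counting} comes from $\alpha K_{t-1}\dunion K_\beta$, which is $T_t$-free. For the upper bound, fix a $T_t$-free graph $G$ and delete every edge lying in no $r$-clique. This changes neither the $r$-clique count nor $T_t$-freeness. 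So we may assume every edge of $G$ lies in an $r$-clique, and it suffices to show $\Delta(G)\le t-2$ for this reduced graph.

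The key step is a leaf-bunch embedding lemma. Let $v$ be the common parent of a set $L$ of $t-r$ leaves, $|L|=t-r$, and let $T'=T_t-L$, a tree on $r$ vertices containing $v$. Suppose some vertex $x$ of $G$ has $d(x)\ge t-1$. Pick any edge $xy$; it lies in an $r$-clique $Q\ni x$. Embed $T'$ into $Q$ with $v\mapsto x$; this works since $Q\cong K_r$ and $|T'|=r$. The remaining leaves need $t-r$ neighbours of $x$ outside $Q$. Since $x$ has $r-1$ neighbours in $Q$ and $d(x)\ge t-1$, it has at least $t-1-(r-1)=t-r$ neighbours outside $Q$. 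Mapping $L$ to these gives a copy of $T_t$, a contradiction. Hence $\Delta(G)\le t-2$, and the cited theorem gives the bound. One should double check that isolated vertices (degree $0$) cause no issue; they do not.

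For the characterization, suppose $G$ is extremal. After edge deletion the reduced graph $G'$ is extremal with $\Delta\le t-2$. By the characterization in the degree-bounded theorem, $G'=\alpha K_{t-1}\dunion H'$ with $H'=K_\beta$ if $\beta\ge r$. Each $K_{t-1}$ in $G'$ is a $K_{t-1}$ in $G$, hence a component of $G$ by Lemma~\ref{isolation}.

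The remaining issue is the deleted edges, which all lie inside the $\beta$-vertex remainder. If $\beta\ge r$, the remainder is already complete, so nothing was deleted. If $\beta<r$, the remainder is arbitrary, which matches the statement. The case $r=t-1$ (zero leaves needed) is consistent with Theorem~\ref{T1}.

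The main obstacle is extracting the exact extremal characterization from the cited bounded-degree theorem, for all $r\ge3$, including the case $\alpha=0$. If that characterization is not available in the needed form, I would argue directly. Equality forces each component of $G'$ to be a clique on $t-1$ vertices, with at most one other component containing cliques. I would prove this by a local convexity/shifting argument on component sizes, using the fact that a component with $\Delta\le t-2$ on $m$ vertices has at most $\frac{m}{t-1}\binom{t-1}{r}$ copies of $K_r$ (since $\sum_x\binom{d(x)}{r-1}/r$ bounds the count), with equality only for $K_{t-1}$.
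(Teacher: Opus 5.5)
Your upper bound is the paper's argument. The leaf-bunch embedding shows that a vertex of degree at least $t-1$ lying in an $r$-clique creates a copy of $T_t$, so the reduced graph $G'$ has maximum degree at most $t-2$, and the Chase/Chao--Dong bound finishes.

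\textbf{The gap: the characterization of extremal graphs.} This is part of the statement. You take it from the equality case of the bounded-degree theorem, but the results you cite give only the inequality. The paper proves the equality case separately as Lemma~\ref{bounded-degree-clique-equality}, by re-entering Chao and Dong's argument. Your fallback does not replace that lemma:
\begin{itemize}
\item The averaged bound $\frac{m}{t-1}\binom{t-1}{r}$ for a component on $m$ vertices is sharp only when $(t-1)\mid m$.
\item Write $m=q(t-1)+\gamma$ with $0<\gamma<t-1$. Then the averaged bound strictly exceeds the true maximum $F(m)=q\binom{t-1}{r}+\binom{\gamma}{r}$ for $m$-vertex graphs of maximum degree at most $t-2$.
\item So a component other than $K_{t-1}$ falling strictly below the averaged bound says nothing about whether it falls strictly below $F(m)$.
\item In particular, nothing in your sketch excludes a connected, non-complete component of order $m\ge t$ with exactly $F(m)$ copies of $K_r$.
\end{itemize}
A shifting argument on component sizes (Lemma~\ref{opt}) only becomes available once you know that every nontrivial component of $G'$ is complete.

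\textbf{The missing step: track equality through Chao and Dong's local double-counting inequality.} Let $\mathcal T_v$ be the set of $r$-cliques meeting $N[v]$. Their inequality is $\sum_v|\mathcal T_v|\le\sum_v\binom{d(v)+1}{r}$.
\begin{enumerate}
\item Suppose the inequality were strict. Then some $v$ has $|\mathcal T_v|<\binom{d(v)+1}{r}$.
\item Put $k=d(v)+1\le t-1$. Apply the sharp bound to $G'-N[v]$ and use the convexity step $F(n-k)+\binom{k}{r}\le F(n)$. This gives fewer than $F(n)$ cliques in $G'$, contradicting extremality.
\item Hence the inequality is an equality, and by their proof this forces $c_r(v)=\binom{d(v)}{r-1}$ for every vertex.
\item Since $r\ge3$, every neighbourhood is then a clique. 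So every nontrivial component of $G'$ is complete, of order between $r$ and $t-1$.
\item Only now does Lemma~\ref{opt} apply, or Theorem~\ref{T1} when $r=t-1$. It yields exactly $\alpha$ copies of $K_{t-1}$ together with $K_\beta$ or isolated vertices.
\end{enumerate}
From there, your use of Lemma~\ref{isolation} and your treatment of the deleted edges coincide with the paper.

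A minor slip: for $r=t-1$ the hypothesis asks for $t-r=1$ leaf, not zero. Your embedding handles this case unchanged.
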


Theorem~\ref{leaf-bunch-counting-theorem} gives a direct family of trees for which Conjecture~\ref{C1} holds. Indeed, for  $3\le r\le t-2$, if $T_t$ has at least $t-r$ leaves with a common parent, then $\lambda(T_t)\ge t-r$, and Theorem~\ref{leaf-bunch-counting-theorem} gives exactly the generalized Tur\'{a}n number predicted by Conjecture~\ref{C1}, together with the extremal graphs.  In particular, Theorem~\ref{leaf-bunch-counting-theorem} contains the star case as a special case and recovers the sharp clique-counting result conjectured by Gan, Loh and Sudakov~\cite{gan2015maximizing} and proved by Chase~\cite{chase2020} and Chao and Dong~\cite{Chao2022}. In this sense, the leaf-bunch theorem extends the bounded-maximum-degree phenomenon underlying the star case to a broader class of trees.

We now state our main counting theorem.

\begin{theorem}\label{T5}
Let \(d\ge 2\) and \(t\ge d^2-d+3\), and let \(T_t\) be a \(t\)-vertex tree. Let
\[
n=\alpha(t-1)+\beta,
\qquad \alpha\ge1,
\qquad 0\le\beta\le t-2.
\]
Then
\[
\ex(n,K_{t-d},T_t)
=\alpha\binom{t-1}{t-d}+\binom{\beta}{t-d}.
\]
The extremal graphs are exactly the following:
\begin{enumerate}[(i)]
\item if \(0\le\beta<t-d\), the graph is \(\alpha K_{t-1}\dunion H\), where \(H\) is any graph on \(\beta\) vertices;
\item if \(t-d\le\beta\le t-2\), the graph is \(\alpha K_{t-1}\dunion K_\beta\).
\end{enumerate}
\end{theorem}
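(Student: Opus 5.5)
We prove Theorem~\ref{T5} by splitting according to $\lambda(T_t)$, as the abstract suggests. Let $r=t-d$.

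\emph{Leaf-rich case.} If $\lambda(T_t)\ge d=t-r$, then Theorem~\ref{leaf-bunch-counting-theorem} applies directly with $r=t-d$ (note $3\le t-d\le t-1$ because $t\ge d^2-d+3$). It gives both the value and the extremal graphs. So we may assume that every vertex of $T_t$ is the parent of at most $d-1$ leaves.

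\emph{Leaf-poor case.} Let $G$ be an $n$-vertex $T_t$-free graph. Delete every edge lying in no $r$-clique; this changes no $r$-clique count. On the set of $r$-cliques, define $Q\sim Q'$ if they share a vertex, and take the transitive closure. The vertex sets of the classes are pairwise disjoint, and each class spans a union of components of the reduced graph. This gives the "nontrivial clique-supported components".

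The key claim is that each such component $C$ has $|V(C)|\le t-1$. Granting it, any graph on at most $t-1$ vertices has at most $\binom{|C|}{r}$ $r$-cliques. Since $x\mapsto\binom{x}{r}$ is convex with $\binom{x}{r}=0$ for $x<r$, maximizing $\sum_C\binom{|C|}{r}$ subject to $\sum|C|\le n$ and $|C|\le t-1$ forces $\alpha$ parts of size $t-1$ and one part of size $\beta$. This yields the bound $\alpha\binom{t-1}{r}+\binom{\beta}{r}$. Equality forces $\alpha$ components equal to $K_{t-1}$, and the remaining $\beta$ vertices either span $K_\beta$ (when $\beta\ge r$) or contain no $r$-clique. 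We still need that the deleted edges do not matter. Each $K_{t-1}$ is a component of the original $G$ by Lemma~\ref{isolation}. When $\beta\ge r$, the leftover $K_\beta$ has $\beta\le t-2$ vertices. Tightness of the convexity argument with $\alpha\ge1$ excludes splitting these vertices differently, and we must check that no original edges attach the leftover part to the $K_{t-1}$'s, which again follows from Lemma~\ref{isolation}.

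\emph{Proof of the key claim.} Suppose a component $C$ has at least $t$ vertices. Then there are $r$-cliques $Q_1,Q_2$ with $|Q_1\cap Q_2|=s\ge1$ and $|Q_1\cup Q_2|=2r-s$, or a chain of such cliques reaching beyond $t-1$ vertices. Take a minimal configuration whose union $U$ has $|U|\ge t$: either two cliques, or a clique together with a new clique meeting it. Since $r=t-d$ is large, $U$ is a union of two large cliques with $s$ shared vertices. We embed $T_t$ into $U$ as follows. Choose a vertex $v$ of $T_t$ and a suitable edge or cut of $T_t$ separating it into two subforests of sizes $a$ and $t-a$, with $a\le r$ and $t-a\le r$ up to shared vertices. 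Place one part inside $Q_1$ and the other inside $Q_2$, using a shared vertex as the attaching point.

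If $s$ is small, some vertex of the union has few neighbours outside its clique. Then we must use that $T_t$ has at most $d-1$ leaves per parent: a tree with no large leaf bunch has an edge whose removal leaves both sides of size at least about $t/d$. The counting requirement that both sides fit, roughly $(d-1)$-many spillover vertices per side, leads to the condition $t\ge d^2-d+3$.

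\emph{Main obstacle.} The hardest step is this embedding lemma: showing that whenever two $(t-d)$-cliques intersect and their union has at least $t$ vertices, every tree with $\lambda(T_t)\le d-1$ embeds. We would first try a centroid or balanced-edge decomposition of $T_t$. We would then treat separately the case of small intersection $s$, which needs careful placement of the $\le d-1$ leaves around each vertex, and the case $s$ close to $r$, which is easy since $U$ is nearly a $K_t$ minus few edges. Here the quantitative bound $t\ge d^2-d+3$ should emerge.
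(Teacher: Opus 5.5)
Your leaf-rich case, the edge deletion, and the convexity and equality analysis all match the paper (Theorem~\ref{leaf-bunch-counting-theorem}, Lemma~\ref{opt}, Lemma~\ref{isolation}). The gap is in your proof of the key claim that every nontrivial component has order at most $t-1$.

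You reduce to a ``minimal configuration'' $U=Q_1\cup Q_2$: two intersecting $r$-cliques with $|U|\ge t$. Such a pair never occurs in a $T_t$-free graph. If $|V(Q_1)\cup V(Q_2)|\ge t$, then $1\le s\le 2r-t=t-2d$. For $t\ge 3d-1$, a Brennan-type division then embeds \emph{every} $t$-vertex tree into $Q_1\cup Q_2$; this is Lemma~\ref{two-clique}. No leaf condition is needed there, so your plan to use $\lambda(T_t)\le d-1$ in the two-clique step puts the hypothesis in the wrong place. Consequently, in a $T_t$-free graph any two intersecting $r$-cliques share at least $t-2d+1$ vertices, and their union has at most $t-1$ vertices.

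A component can nevertheless have $t$ vertices. Take an $r$-clique $R$ and $d$ further pairwise nonadjacent vertices, each joined to the same $r-1$ vertices of $R$. The resulting $d+1$ cliques of order $r$ pairwise have unions of order $r+1\le t-1$, yet together they span $t$ vertices. So no two of them can host $T_t$, and any embedding must use the whole configuration. The union of the cliques accumulated along a chain is not a clique, so ``a clique together with a new clique meeting it'' is not a two-clique configuration.

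What is missing is the multi-clique argument.
\begin{enumerate}[(i)]
\item From the overlap bound $|V(Q)\cap V(Q')|\ge t-2d+1$ and $2(t-2d+1)>t-d$, intersection is already transitive, not just after taking a closure (Lemma~\ref{large-transitivity}). Hence every $r$-clique of a component meets a fixed clique $R$.
\item It follows that any $d$ vertices $u_1,\dots,u_d$ of the component outside $R$ satisfy $|N_R(u_i)|\ge t-2d+1$, although their neighbourhoods in $R$ may differ substantially.
\item One must then embed $T_t$ into $G[V(R)\cup\{u_1,\dots,u_d\}]$ by mapping $d$ leaves to the $u_i$. Here $\lambda(T_t)\le d-1$ guarantees at least two distinct parents. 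Then each parent $y$ has a candidate set $\bigcap_{i\in I(y)}N_R(u_i)$ of size at least $r-(d-p+1)(d-1)\ge p$, where $p\ge2$ is the number of parents. This uses $t\ge d^2-d+3$, and lets the parents receive distinct images.
\item If $T_t$ has fewer than $d$ leaves, use the leaves together with well-separated degree-$2$ vertices instead (Lemma~\ref{component-bound}).
\end{enumerate}
This is precisely where both $\lambda(T_t)\le d-1$ and $t\ge d^2-d+3$ are needed, and your proposal does not handle this configuration.
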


The proof will be given in Section 2.2. It  has two structural steps. First, a tree-separation argument shows that two \((t-d)\)-cliques with a medium-sized intersection already contain every tree on \(t\) vertices. Thus, in a \(T_t\)-free graph, intersecting large cliques have a large overlap and clique intersection becomes transitive. After deleting edges that lie in no counted clique, the intersection classes are precisely the nontrivial components. Second, Lemma~\ref{component-bound} gives a uniform bound of \(t-1\) on the orders of these components. The counting theorem then follows from a discrete convexity argument.

\subsection{Clique-spectral results}\label{spectral-results}

We next introduce the clique-spectral setting and state all spectral results used later. Let $G$ be a graph with $V(G)=\{v_1,\ldots,v_n\}$. The \emph{adjacency matrix} of $G$ is the $n\times n$ matrix
\[
A(G)=(a_{ij}),
\qquad
a_{ij}=\begin{cases}
1,& v_iv_j\in E(G),\\
0,& v_iv_j\notin E(G).
\end{cases}
\]
Since $G$ is simple and undirected, $A(G)$ is a symmetric $(0,1)$-matrix with zero diagonal. Its largest eigenvalue is called the \emph{adjacency spectral radius} of $G$ and is denoted by $\rho(G)$.   For $n>k$, let
\[
S_{n,k}=K_k\vee \overline{K}_{n-k},
\]
and let $S_{n,k}^{+}$ be the graph obtained from $S_{n,k}$ by adding one edge inside the independent set. Nikiforov~\cite{Nikiforov2010paths} conjectured that these two graphs give the sharp adjacency-spectral thresholds for forcing all trees of the corresponding orders. Cioab\u{a}, Desai and Tait~\cite{CioabaDesaiTait2023} confirmed this conjecture.

\begin{theorem}[Cioab\u{a}--Desai--Tait~\cite{CioabaDesaiTait2023}]\label{CDT-spectral-Erdos-Sos}
Let $k\ge2$ be fixed. For all sufficiently large $n$, the following statements hold.
\begin{enumerate}[(i)]
\item If an $n$-vertex graph $G$ satisfies
\[
\rho(G)\ge \rho(S_{n,k}),
\]
then either $G$ contains every tree on $2k+2$ vertices or $G\cong S_{n,k}$.
\item If an $n$-vertex graph $G$ satisfies
\[
\rho(G)\ge \rho(S_{n,k}^{+}),
\]
then either $G$ contains every tree on $2k+3$ vertices or $G\cong S_{n,k}^{+}$.
\end{enumerate}
\end{theorem}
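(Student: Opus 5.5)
Since the statement is quoted from Cioab\u{a}, Desai and Tait~\cite{CioabaDesaiTait2023}, the paper can use it as a black box. If I were to prove it, I would use the standard spectral-stability scheme. I describe part (i); part (ii) is analogous.

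\textbf{Reduction.} Suppose $G$ does not contain every tree on $2k+2$ vertices, and fix a tree $T$ on $2k+2$ vertices that $G$ misses. It then suffices to show the following: every $T$-free $n$-vertex graph $G$ with $\rho(G)\ge\rho(S_{n,k})$ is isomorphic to $S_{n,k}$. I would prove this for a graph $G$ maximizing $\rho$ among $T$-free $n$-vertex graphs. At the end I would check that equality $\rho(G)=\rho(S_{n,k})$ forces the same structure, using the strict monotonicity of $\rho$ under adding edges to a connected graph.

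\textbf{Step 1: global bounds.} Any graph of minimum degree at least $2k+1$ contains every tree on $2k+2$ vertices, by greedy embedding. Hence $e(G)\le (2k+1)n$, while $\rho(G)\ge\rho(S_{n,k})=(1+o(1))\sqrt{kn}$. We may assume $G$ is connected. Let $x$ be the Perron vector, normalized so that $\max_v x_v=x_z=1$.

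\textbf{Step 2: the heavy set.} Expanding
\[
\rho^2x_z=\sum_{v}|N(v)\cap N(z)|\,x_v
\]
and using the edge bound, I would show that the set $L=\{v: x_v\ge\varepsilon\}$ has size $O(1)$. I would also show that every vertex of $L$ has degree $(1-o(1))n$, and that vertices outside $L$ have eigenvector entries $O(1/\sqrt n)$ and bounded degree into $V\setminus L$. Comparing $\rho^2\approx kn$ with $\sum_{v\in L}x_v d(v)$ then gives $|L|=k$ after a second, sharper pass.

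\textbf{Step 3: exact structure.} Let $R$ be the set of vertices outside $L$ adjacent to all of $L$; then $|R|=n-o(n)$. Any tree on $2k+2$ vertices whose smaller colour class has size at most $k+1$ embeds in $K_{k}\vee\overline{K}_{m}$ plus a small extra structure on the large side. So the $T$-freeness of $G$ forbids $G[V\setminus L]$ from containing a few specific configurations, and which configurations are forbidden depends on $T$. Now suppose some vertex of $R$ fails to be adjacent to all of $L$, or $G[L]$ is not complete. Then moving or adding edges keeps $G$ $T$-free and strictly increases $\rho$, by the eigenvector comparison
\[
\rho(G')-\rho(G)\ge \frac{x^{\top}(A(G')-A(G))x}{x^{\top}x}>0,
\]
which contradicts maximality. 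This yields $G\supseteq S_{n,k}$.

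\textbf{Main obstacle.} The hard part is showing that $G[V\setminus L]$ is empty, i.e.\ that $G$ contains no edge outside $L$. Some trees $T$ (for instance balanced double brooms) do embed in $S_{n,k}^{+}$, and the argument must exploit the specific tree $T$ that $G$ misses. I would first try this: any edge inside the independent side, together with the $k$ universal vertices, lets one embed $T$ whenever $T$ has a vertex of its larger class that can be placed on that edge. When no such placement works, I would compare $\rho$ against $\rho(S_{n,k})$ directly via the Rayleigh quotient with $x$ restricted. This case analysis over tree shapes is where I expect the bulk of the work to lie.
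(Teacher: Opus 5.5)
You are right that the paper gives no proof of this statement. It is quoted from Cioab\u{a}--Desai--Tait as motivation, so invoking it as a black box is exactly what the paper does. The sketch you add, however, would not go through as written.

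The failing step is Step~3. There you claim that if $G[L]$ is not complete, then adding edges keeps $G$ $T$-free, so the $T$-free maximizer satisfies $G\supseteq S_{n,k}$. This cannot work whenever $T$ embeds in $S_{n,k}$, i.e.\ whenever $T$ has a vertex cover of size at most $k$. For instance, let $T$ be the double star whose two adjacent centres each carry $k$ leaves. Both colour classes have $k+1$ vertices, so $T\not\subseteq K_{k,m}$, and $T$-freeness is compatible with your configuration ($|L|=k$, $|R|=n-o(n)$). But any edge inside $L$, together with $2k$ vertices of $R$, yields a copy of $T$, so $T$-freeness forces $L$ to be \emph{independent}. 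The $T$-free maximizer then never contains $S_{n,k}$, and what must be proved is the quantitative statement $\rho(G)<\rho(S_{n,k})$. The argument therefore has to compare with $S_{n,k}$ from above: reduce to $G\subseteq S_{n,k}$ and use monotonicity of $\rho$. Equivalently, maximize over the subgraph-closed family of all $n$-vertex graphs missing \emph{some} tree on $2k+2$ vertices, which contains $S_{n,k}$. Building $S_{n,k}$ inside $G$ by $T$-free-preserving additions is the wrong direction.

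Second, the part you call the main obstacle is left open, and it is not a case analysis over tree shapes. There is a uniform embedding lemma you are missing. Suppose $|L|=k$, $R$ is the common neighbourhood of $L$ with $|R|\ge 2k+2$, and some $r\in R$ has a neighbour $y\notin L$. Then $G$ contains every tree on $2k+2$ vertices:
\begin{itemize}
\item A tree with a colour class of size at most $k$ embeds in $K_{k,|R|}$.
\item If both classes have $k+1$ vertices, the degrees in either class sum to $2k+1<2(k+1)$. So a class contains a leaf $v$ with neighbour $u$. Map the other $k$ vertices of that class onto $L$, send $u\mapsto r$ and $v\mapsto y$, and map the rest of the other class into $R\setminus\{r,y\}$.
\end{itemize}
In particular every tree on $2k+2$ vertices embeds in $S_{n,k}^{+}$. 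Thus missing a single tree forces $N(r)=L$ for every $r\in R$, uniformly in $T$.

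What remains is the leftover set $W=V\setminus(L\cup R)$. Re-attaching a vertex of $W$ to exactly $L$ keeps the missed tree missing, since any embedding can be rerouted through an unused twin in $R$. To show this move strictly increases the Rayleigh quotient, one needs Perron-vector estimates: entries on $L$ near the maximum, small entries off $L$, and $2k$-degeneracy to control edges inside $W$. Repeating the move ends with $G\subseteq S_{n,k}$. None of this appears in your sketch.

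Finally, part (ii) is not a routine analogue. $P_{2k+3}$, whose smaller class consists of degree-$2$ vertices, avoids $S_{n,k}^{+}$. So a single pendant edge off $R$ no longer forces every tree, and the forbidden configurations change.
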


Theorem~\ref{CDT-spectral-Erdos-Sos} corresponds to the case $r=2$ of the $r$-clique spectral radius defined below. For an integer $r\ge2$, let $\mathcal C_r(G)$ denote the family of $r$-cliques of a graph $G$.
Following Liu and Bu~\cite{LiuBu2023}, the \emph{$r$-clique tensor} of an $n$-vertex graph $G$ is the order-$r$, dimension-$n$ symmetric tensor
\[
\mathcal A_r(G)=(a_{i_1\ldots i_r}),
\]
where
\[
a_{i_1\ldots i_r}=
\begin{cases}
\dfrac{1}{(r-1)!},& \{i_1,\ldots,i_r\}\in\mathcal C_r(G),\\[2mm]
0,&\text{otherwise}.
\end{cases}
\]
The spectral radius of $\mathcal A_r(G)$ is called the \emph{$r$-clique spectral radius} of $G$ and is denoted by $\rho_r(G)$  which can be viewed as the spectral  counterpart of counting $r$-cliques.
By the variational characterization of the spectral radius of a nonnegative symmetric tensor~\cite{Qi2013},
\begin{equation}\label{spectral-variational}
\rho_r(G)=
\max\left\{
 r\sum_{\{i_1,\ldots,i_r\}\in\mathcal C_r(G)}x_{i_1}\cdots x_{i_r}:
 x_i\ge0,\ \sum_{i=1}^n x_i^r=1
\right\}.
\end{equation}
For a vertex $v$, write
\[
c_r(v)=|\{K\in\mathcal C_r(G):v\in V(K)\}|.
\]
For every admissible vector in~\eqref{spectral-variational}, the arithmetic--geometric mean inequality gives
\[
r\prod_{v\in K}x_v\le \sum_{v\in K}x_v^r.
\]
Summing over all $r$-cliques, we obtain the useful local bound
\begin{equation}\label{local-clique-degree}
\rho_r(G)\le \max_{v\in V(G)}c_r(v).
\end{equation}
The $2$-clique tensor is the adjacency matrix, and hence $\rho_2(G)=\rho(G)$.

The spectral counterpart of Theorem~\ref{T1} is also immediate.

\begin{theorem}\label{spectral-T1}
Let $T_t$ be a $t$-vertex tree and let $G$ be a $T_t$-free graph. Then
\[
\rho_{t-1}(G)\le \rho_{t-1}(K_{t-1})=1.
\]
If $|V(G)|\ge t-1$, equality holds if and only if $K_{t-1}$ is a component of $G$.
\end{theorem}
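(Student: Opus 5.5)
The plan is to combine Lemma~\ref{isolation} with the local bound~\eqref{local-clique-degree}, and then handle equality through the variational characterization~\eqref{spectral-variational}.

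\textbf{Upper bound.} Let $G$ be $T_t$-free and let $v$ be any vertex. If $v$ lies in some $(t-1)$-clique $Q$, then Lemma~\ref{isolation} says that $Q$ is a component of $G$. Every $(t-1)$-clique through $v$ lies in the component of $v$, which is exactly $V(Q)$, so $c_{t-1}(v)=1$. Otherwise $c_{t-1}(v)=0$. Hence $\max_v c_{t-1}(v)\le 1$, and~\eqref{local-clique-degree} gives $\rho_{t-1}(G)\le 1$. For $K_{t-1}$, take the uniform vector $x_i=(t-1)^{-1/(t-1)}$ in~\eqref{spectral-variational}. This gives $(t-1)\cdot (t-1)^{-1}=1$, so $\rho_{t-1}(K_{t-1})=1$.

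\textbf{Equality, sufficiency.} Suppose $|V(G)|\ge t-1$ and $K_{t-1}$ is a component. Put the uniform weights above on that component and $0$ elsewhere. This test vector gives value $1$, so $\rho_{t-1}(G)\ge 1$, and equality follows from the upper bound.

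\textbf{Equality, necessity.} Suppose $G$ has no $(t-1)$-clique. Then the sum in~\eqref{spectral-variational} is empty and $\rho_{t-1}(G)=0<1$. So if $\rho_{t-1}(G)=1$, then $G$ contains some $(t-1)$-clique, and by Lemma~\ref{isolation} that clique is a $K_{t-1}$-component.

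Attainment of the maximum in~\eqref{spectral-variational} is not actually needed for this step: we only use that the maximum is positive exactly when some $(t-1)$-clique exists. The hypothesis $|V(G)|\ge t-1$ only guarantees that the statement is not vacuous. No step is a genuine obstacle. The only care needed is to note that $c_{t-1}(v)\le1$ because all cliques through $v$ lie in one component of order exactly $t-1$.
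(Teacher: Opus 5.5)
Your proof is correct and is essentially the paper's argument. The paper sums the AM--GM inequality directly over the vertex-disjoint $(t-1)$-cliques given by Lemma~\ref{isolation}; your use of the local bound~\eqref{local-clique-degree} with $c_{t-1}(v)\le 1$ is the same computation, and your equality argument (the uniform vector on a $K_{t-1}$-component, and $\rho_{t-1}(G)=0$ when $G$ has no $(t-1)$-clique) also matches the paper.
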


\begin{proof}
By Lemma \ref{isolation},  distinct copies of $K_{t-1}$ in $G$ are vertex-disjoint. For every vector $x\ge0$ with $\sum_v x_v^{t-1}=1$, the arithmetic--geometric mean inequality gives, for each $(t-1)$-clique $Q$,
\[
(t-1)\prod_{v\in V(Q)}x_v\le \sum_{v\in V(Q)}x_v^{t-1}.
\]
Since the $(t-1)$-cliques are vertex-disjoint, summing these inequalities and using~\eqref{spectral-variational} gives $\rho_{t-1}(G)\le1$. If $G$ contains a copy of $K_{t-1}$, equality is attained by the uniform vector supported on that clique. Hence equality is equivalent to the presence of a $K_{t-1}$-component.
\end{proof}

Recall the parameter $\lambda(T)$ from Definition~\ref{leaf-bunch-parameter}. The clique-counting form of the leaf-bunch argument was stated in Theorem~\ref{leaf-bunch-counting-theorem}. We now state its clique-spectral counterpart.

\begin{theorem}\label{leaf-bunch-spectral-theorem}
Let $T_t$ be a $t$-vertex tree and $3\le r\le t-1$. Suppose that $T_t$ contains at least $t-r$ vertices sharing a common parent.
Let $G$ be an $n$-vertex $T_t$-free graph. Then
\begin{equation}\label{leaf-bunch-criterion-bound}
\rho_r(G)\le\binom{t-2}{r-1}.
\end{equation}
If $n\ge t-1$, equality holds if and only if $K_{t-1}$ is a component of $G$.
\end{theorem}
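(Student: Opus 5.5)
The plan is to combine the local bound \eqref{local-clique-degree} with a degree restriction forced by the leaf bunch. Let $p\in V(T_t)$ be a vertex with at least $t-r$ leaf neighbours, and let $L$ be a set of exactly $t-r$ of these leaves. Then $T':=T_t-L$ is a tree on $r$ vertices containing $p$, since deleting leaves preserves connectivity. The key claim (the leaf-bunch criterion) is:
\begin{quote}
\emph{in a $T_t$-free graph $G$, every vertex $u$ lying in some $r$-clique satisfies $d_G(u)\le t-2$.}
\end{quote}
To prove it, let $K$ be an $r$-clique containing $u$ and suppose $d_G(u)\ge t-1$. Since every tree on $r$ vertices embeds into $K_r$, we can embed $T'$ into $K$ with $p\mapsto u$. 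The vertex $u$ has at least $d_G(u)-(r-1)\ge t-r$ neighbours outside $V(K)$. Map the $t-r$ leaves of $L$ injectively onto these neighbours. This produces a copy of $T_t$, a contradiction.

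Given the claim, fix a vertex $v$ with $c_r(v)>0$. Every $r$-clique through $v$ is $v$ together with an $(r-1)$-clique of $G[N(v)]$. Hence
\[
c_r(v)\le\binom{d_G(v)}{r-1}\le\binom{t-2}{r-1}.
\]
Vertices with $c_r(v)=0$ satisfy the same bound trivially. By \eqref{local-clique-degree} we get $\rho_r(G)\le\binom{t-2}{r-1}$, which is \eqref{leaf-bunch-criterion-bound}.

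For the equality statement, first suppose $K_{t-1}$ is a component of $G$. Take the uniform vector $x_v=(t-1)^{-1/r}$ on that component and $0$ elsewhere. Then \eqref{spectral-variational} gives
\[
\rho_r(G)\ge r\binom{t-1}{r}\frac{1}{t-1}=\binom{t-2}{r-1},
\]
so equality holds. Conversely, assume equality and let $x$ be a maximizer in \eqref{spectral-variational}. Following the derivation of \eqref{local-clique-degree},
\[
\rho_r(G)=r\sum_{K}\prod_{v\in K}x_v\le\sum_{K}\sum_{v\in K}x_v^r=\sum_v c_r(v)x_v^r\le\binom{t-2}{r-1}\sum_v x_v^r .
\]
Equality in the last step forces $c_r(v)=\binom{t-2}{r-1}$ for every $v$ with $x_v>0$, and the support of $x$ is nonempty. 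Fix such a $v$. Since $r\ge3$ and $c_r(v)\ge1$, the bound $d_G(v)\le t-2$ together with $c_r(v)=\binom{t-2}{r-1}$ forces two things. First, $d_G(v)=t-2$. Second, $G[N(v)]$ contains all $\binom{t-2}{r-1}$ possible $(r-1)$-subsets as cliques. Because $r-1\ge2$, every pair of vertices of $N(v)$ lies in such a subset, so $G[N(v)]$ is complete. Thus $\{v\}\cup N(v)$ spans a $K_{t-1}$, and by Lemma~\ref{isolation} it is a component of $G$. (The hypothesis $n\ge t-1$ is only needed so that the characterization is not vacuous.)

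I expect the main obstacle to be the leaf-bunch criterion, specifically the bookkeeping in the embedding step. One must check that $T'$ really has $r$ vertices, so that it fits inside the clique $K$ with $p$ sent to $u$, and that $u$ has enough neighbours outside $K$ to receive the leaves. The count $d_G(u)-(r-1)\ge t-r$ handles this uniformly, including the boundary case $r=t-1$, where only one leaf is needed. The remaining difficulty is the equality analysis, where the condition $r\ge3$ is exactly what allows us to pass from "all $(r-1)$-subsets are cliques" to "$N(v)$ is complete".
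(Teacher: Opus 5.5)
Your proposal is correct and follows the paper's proof essentially step for step. The leaf-bunch embedding gives $d_G(v)\le t-2$ for every vertex in an $r$-clique, and the local bound \eqref{local-clique-degree} then gives the inequality; for equality, the paper obtains a vertex with $c_r(v)=\binom{t-2}{r-1}$ directly from $\rho_r(G)\le\max_v c_r(v)\le\binom{t-2}{r-1}$ rather than from a maximizing vector, and then concludes exactly as you do with $N_G[v]\cong K_{t-1}$ and Lemma~\ref{isolation}.
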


We now state the clique-spectral result in the same $t-d$ form as Theorem~\ref{T5}.

\begin{theorem}\label{spectral-T5}
Let $d\ge2$ and $t\ge d^2-d+3$, and let $T_t$ be a $t$-vertex tree. Then every $T_t$-free graph $G$ satisfies
\[
\rho_{t-d}(G)
\le \rho_{t-d}(K_{t-1})
=\binom{t-2}{t-d-1}
=\binom{t-2}{d-1}.
\]
If $|V(G)|\ge t-1$, equality holds if and only if
\[
G=K_{t-1}\dunion H,
\]
where $H$ is an arbitrary $T_t$-free graph.
\end{theorem}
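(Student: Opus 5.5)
The plan is to combine the structural description behind Theorem~\ref{T5} with the variational formula~\eqref{spectral-variational}, so that the spectral radius splits over components of a reduced graph. Set $r=t-d$. First delete every edge of $G$ that lies in no $r$-clique. This does not change $\mathcal C_r(G)$, so $\rho_r$ is unchanged, and the new graph is still $T_t$-free.

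The structural input comes from the proof of Theorem~\ref{T5}, which I take as available. Under $t\ge d^2-d+3$, the tree-separation lemma shows that two $r$-cliques with nonempty intersection must have large overlap, so intersection is an equivalence relation on $\mathcal C_r(G)$. Its classes span exactly the nontrivial components $C_1,\dots,C_m$ of the reduced graph, and Lemma~\ref{component-bound} gives $|V(C_j)|\le t-1$. Since every $r$-clique lies inside a single $C_j$, the multilinear form in~\eqref{spectral-variational} splits as
\[
r\sum_{K\in\mathcal C_r(G)}\prod_{v\in K}x_v=\sum_j r\sum_{K\in\mathcal C_r(C_j)}\prod_{v\in K}x_v .
\]
Write $s_j=\sum_{v\in C_j}x_v^r$. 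Homogeneity of degree $r$ bounds the $j$-th term by $\rho_r(C_j)\,s_j$, and $\sum_j s_j\le1$. Hence
\[
\rho_r(G)\le\max_j\rho_r(C_j).
\]

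Next, each $C_j$ is a subgraph of $K_{t-1}$, up to adding isolated vertices. Monotonicity of~\eqref{spectral-variational} under adding cliques and vertices then gives $\rho_r(C_j)\le\rho_r(K_{t-1})$. For the clique itself, the uniform vector $x_v=(t-1)^{-1/r}$ and the local bound~\eqref{local-clique-degree} together show that $\rho_r(K_{t-1})$ equals the common clique-degree $c_r(v)=\binom{t-2}{r-1}=\binom{t-2}{d-1}$.

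For equality, suppose $\rho_r(G)=\binom{t-2}{d-1}$. Then some $C_j$ has $\rho_r(C_j)=\rho_r(K_{t-1})$. The main obstacle is strictness: I must show that a proper subgraph of $K_{t-1}$, whether missing a vertex or having a missing edge that kills some $r$-clique, has strictly smaller $\rho_r$. First I would apply~\eqref{local-clique-degree} directly. If $C_j$ has at most $t-2$ vertices, every clique-degree is at most $\binom{t-3}{r-1}<\binom{t-2}{r-1}$. Otherwise $C_j$ spans $t-1$ vertices but is not complete, and then a missing edge $uv$ lowers $c_r(u)$. For the remaining vertices, equality in~\eqref{local-clique-degree} would force equality in every AM--GM step, so the maximizing vector is constant on each clique. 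Since $C_j$ is connected through overlapping cliques, the vector would be uniform on $C_j$. But the uniform vector gives the value $r|\mathcal C_r(C_j)|/|V(C_j)|<\binom{t-2}{r-1}$. A maximizer exists by compactness, so the inequality is strict.

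Therefore equality forces some $C_j$ to be a $K_{t-1}$. By Lemma~\ref{isolation}, this copy of $K_{t-1}$ is a component of the original $G$, so $G=K_{t-1}\dunion H$ with $H$ $T_t$-free. Conversely, any such $G$ attains the bound by taking the uniform vector on the $K_{t-1}$-component. The only delicate check is that equality in the splitting step forces the maximizing vector to concentrate on an extremal component, and the strictness argument above handles this.
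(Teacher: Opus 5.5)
There is a genuine gap: you apply Lemma~\ref{component-bound} to every tree $T_t$, but that lemma assumes $\lambda(T_t)\le d-1$, i.e., that no $d$ leaves share a parent. The assumption matters. In Case~1 of its proof, it is what guarantees $p\ge2$ distinct parents among the $d$ chosen leaves. For the star $S_t$, all chosen leaves have the same parent, so $p=1$. Then one only gets $|B_y|\ge r-d(d-1)=t-d^2$, which is nonpositive for $t\le d^2$; that range lies inside the allowed one once $d\ge3$. So the embedding argument gives nothing there.

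The bound is also not available ``from the proof of Theorem~\ref{T5}''. In that proof the case $\lambda(T_t)\ge d$ is handled through Theorem~\ref{leaf-bunch-counting-theorem}, i.e., the bounded-degree clique theorem, which says nothing about the orders of the components of $G_r$. So for leaf-rich trees your claim $|V(C_j)|\le t-1$ is unjustified as written. The conclusion might still be provable, but it would need its own argument.

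The simplest repair is the paper's case split. Suppose $\lambda(T_t)\ge d$, and take $d$ sibling leaves with parent $p$. Embed the remaining $r$ vertices of $T_t$ into an $r$-clique through $v$ with $p\mapsto v$. This shows that every vertex $v$ lying in an $r$-clique has $d_G(v)\le t-2$. Hence $c_r(v)\le\binom{t-2}{r-1}$, and \eqref{local-clique-degree} gives the bound immediately. Equality forces some $v$ with $d_G(v)=t-2$ such that every $(r-1)$-subset of $N_G(v)$ is a clique. Since $r=t-d\ge3$, this makes $N_G[v]\cong K_{t-1}$, which is a component by Lemma~\ref{isolation}. This is Theorem~\ref{leaf-bunch-spectral-theorem}.

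In the complementary case $\lambda(T_t)\le d-1$, your argument is correct and agrees with the paper's. The splitting over components is Lemma~\ref{spectral-component-max}. Your strictness argument is a valid substitute for the Maclaurin--H\"older proof of Lemma~\ref{complete-spectral-bound}: equality in AM--GM forces a maximizer that is constant on overlapping cliques, hence uniform on $C_j$, and then the value is $r|\mathcal C_r(C_j)|/(t-1)<\binom{t-2}{r-1}$.
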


The classical Erd\H{o}s--S\'{o}s conjecture has the standard tightness construction given, when $t-1$ divides $n$, by the disjoint union of copies of $K_{t-1}$. By contrast, the exceptional graphs $S_{n,k}$ and $S_{n,k}^{+}$ in the adjacency-spectral theorem are connected join-type graphs with a small complete core and a large independent part. Nikiforov~\cite{Nikiforov2010paths} proved that $S_{n,k}$ and $S_{n,k}^{+}$ are the adjacency-spectral extremal graphs for forbidding $P_{2k+2}$ and $P_{2k+3}$, respectively, when $n$ is sufficiently large. Thus the adjacency-spectral formulation does not simply replace the number of edges in the classical tightness construction by its spectral radius.

The relation between the counting and spectral problems is more direct in our large-clique setting. Theorem~\ref{T5} shows that the $K_{t-d}$-counting extremal graph consists of as many complete blocks of order $t-1$ as possible, together with one possible smaller complete block. Theorem~\ref{spectral-T5} has the same local extremal block $K_{t-1}$, but only one such block is needed: clique counts add over components, whereas clique-spectral radius takes the maximum over clique-supported components.

Section~\ref{main-proof} contains all proofs not already given here. Section~\ref{preliminaries} proves the  structural lemmas. Section~\ref{counting-proof} gives the discrete optimization lemma, proves Theorems~\ref{leaf-bunch-counting-theorem} and~\ref{leaf-bunch-spectral-theorem} together, and then proves Theorem~\ref{T5}. Section~\ref{spectral-proofs} proves the remaining spectral lemmas and proves Theorem~\ref{spectral-T5}. Section~\ref{concluding-remarks} contains concluding remarks and open problems.

Conjecture~\ref{C1} is a clique-counting version of the Erd\H{o}s--S\'{o}s conjecture.
Our approach is not an edge-counting argument. It is a separate method for large cliques. We delete every edge not contained in an $r$-clique, study how $r$-cliques intersect in an extremal graph, and define a relation on the set of all $r$-cliques based on whether they intersect. Based on the intersection property of $r$-cliques, we show that the intersection relation is an equivalence relation and that the subgraph induced by each equivalence class is a component. Thus we get structural properties of an extremal graph.

\section{Proofs of the main results}\label{main-proof}

\subsection{Structural lemmas}\label{preliminaries}

We prove the structural lemmas needed in both the counting and clique-spectral arguments. A nontrivial component is a component with at least two vertices.

We will use the following tree decomposition due to Brennan~\cite{Brennan2016,Brennan2017}.

\begin{definition}\label{tree-division-definition}
Let \(T\) be a tree of order \(n\). A \emph{\((1/3,2/3)\)-division} of \(T\) consists of a vertex \(z\in V(T)\) and two vertex-disjoint forests \(K,H\subseteq T-z\) such that
\[
V(K)\dunion V(H)=V(T)\setminus\{z\},
\]
there is no edge between \(V(K)\) and \(V(H)\), and
\[
\frac{n-1}{3}\le |V(K)|\le\frac{n-1}{2},\qquad
\frac{n-1}{2}\le |V(H)|\le\frac{2(n-1)}{3}.
\]
\end{definition}

\begin{lemma}[Brennan~\cite{Brennan2016,Brennan2017}]\label{tree-division}
Every tree has a \((1/3,2/3)\)-division.
\end{lemma}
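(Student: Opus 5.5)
We prove Lemma~\ref{tree-division} by the standard centroid argument, refined by a greedy grouping of branches.

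\emph{Step 1: pick a centroid.} Let \(T\) be a tree of order \(n\). Choose \(z\) to be a centroid of \(T\), i.e.\ a vertex such that every component of \(T-z\) has at most \((n-1)/2\) vertices. Such a vertex exists. Start at any vertex. While some branch of \(T-z\) has more than \((n-1)/2\) vertices, move \(z\) into that branch. Each move strictly decreases the size of the largest branch, because the branch containing the old vertex now has fewer than \(n-(n-1)/2\) vertices. Let the components of \(T-z\) be \(B_1,\dots,B_k\), with sizes \(b_1\ge b_2\ge\cdots\ge b_k\). Then \(\sum b_i=n-1\) and each \(b_i\le (n-1)/2\).

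\emph{Step 2: group the branches.} Since no edges join different branches, any split of the branches into two groups gives forests \(K,H\) with no edges between them. It therefore suffices to find a set \(I\subseteq\{1,\dots,k\}\) with
\[
\frac{n-1}{3}\le \sum_{i\in I}b_i\le \frac{n-1}{2}.
\]
We then let \(K\) be the union of the branches in \(I\) and \(H\) the union of the remaining branches. The bound on \(|V(H)|\) is automatic, since \(|V(H)|=(n-1)-|V(K)|\).

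\emph{Step 3: the greedy choice.} If \(b_1\ge (n-1)/3\), take \(I=\{1\}\); this works because \(b_1\le(n-1)/2\) by Step 1. Otherwise every \(b_i<(n-1)/3\). In that case, add branches one at a time in order \(B_1,B_2,\dots\). Stop at the first moment the running sum reaches at least \((n-1)/3\). This moment occurs, since the total is \(n-1\).

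At the moment we stop, the previous sum was below \((n-1)/3\). The last branch added has size below \((n-1)/3\), so the new sum is below \(2(n-1)/3\). That upper bound is too weak: we need the sum to be at most \((n-1)/2\). This is the main obstacle.

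\emph{Step 4: fixing the overshoot.} Suppose the stopping sum \(s\) satisfies \((n-1)/2<s<2(n-1)/3\). Take the complementary set of branches instead. Its total is \(n-1-s\), which lies in \(((n-1)/3,(n-1)/2)\), exactly the range we need. So either the greedy set or its complement serves as \(K\).

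The construction needs no special hypotheses on \(T\). Small cases such as \(n=1\), where \(K\) and \(H\) are both empty, are consistent with the inequalities. This completes the division.
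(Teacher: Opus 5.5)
The paper does not prove this lemma; it is quoted from Brennan. Your argument therefore has to stand on its own. Steps~2--4 are correct once every branch has order at most $(n-1)/2$, but Step~1 is false as stated.

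A vertex $z$ all of whose branches have at most $(n-1)/2$ vertices need not exist. A centroid only guarantees branches of order at most $\lfloor n/2\rfloor$. Suppose $n$ is even and $T$ has an edge $uv$ whose removal leaves two components of order $n/2$, as for $T=P_4$. Then every vertex has a branch of order at least $n/2>(n-1)/2$. Your termination argument breaks exactly here. If the branch $B$ you move into has $|B|=n/2$, the new branch containing the old vertex has $n-|B|=n/2$ vertices. So the largest branch does not decrease, and the walk oscillates between $u$ and $v$. Consequently Step~3 is unjustified when $b_1=n/2$, since $I=\{1\}$ gives $|V(K)|=n/2>(n-1)/2$.

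The repair is short. Take a genuine centroid, so $b_1\le\lfloor n/2\rfloor$. In Step~3, apply your Step~4 complementation to $B_1$ as well: if $(n-1)/2<b_1\le\lfloor n/2\rfloor$, let $H=B_1$ and let $K$ be the union of the other branches, of order $n-1-b_1$. This order lies in $[(n-1)/3,(n-1)/2]$ provided $\lfloor n/2\rfloor\le 2(n-1)/3$, which holds for every $n\neq2$.

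For $n=2$, no division exists at all, since no integer lies in $[1/3,1/2]$. So your closing claim that no special hypotheses are needed is false, and the lemma as stated fails for $T=K_2$. This is harmless for the paper, which only applies the lemma to trees with $t=3d-1\ge5$ vertices, but your proof should exclude $n=2$ explicitly.
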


Lemma \ref{tree-division} has the following  direct implication.
\begin{corollary}[Brennan~\cite{Brennan2016,Brennan2017}]\label{separation}
Let \(d\ge2\), and let \(T\) be a tree on at least \(3d-1\) vertices. Then there are disjoint sets \(X,Y\subseteq V(T)\), each of order \(d\), with no edge between \(X\) and \(Y\).
\end{corollary}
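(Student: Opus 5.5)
We derive Corollary~\ref{separation} directly from Lemma~\ref{tree-division}. Let \(T\) have \(m\ge 3d-1\) vertices, and fix a \((1/3,2/3)\)-division \((z,K,H)\) of \(T\) as in Definition~\ref{tree-division-definition}. The vertex sets \(V(K)\) and \(V(H)\) are disjoint, and the definition guarantees that no edge of \(T\) joins them. So any \(d\)-subset \(X\subseteq V(K)\) and any \(d\)-subset \(Y\subseteq V(H)\) give the required pair: they are disjoint and have no edge between them. It remains only to check that both parts have at least \(d\) vertices.

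By the definition, \(|V(K)|\ge (m-1)/3\) and \(|V(H)|\ge (m-1)/2\ge (m-1)/3\). Since \(m\ge 3d-1\), we get \((m-1)/3\ge (3d-2)/3=d-2/3\). Because \(|V(K)|\) is an integer, this forces \(|V(K)|\ge \lceil d-2/3\rceil = d\), and likewise \(|V(H)|\ge d\). Choosing \(X\) to be any \(d\) vertices of \(K\) and \(Y\) any \(d\) vertices of \(H\) completes the argument.

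The only point needing care is the integrality rounding in this last step: the threshold \(3d-1\) is exactly what makes \(\lceil (m-1)/3\rceil\ge d\). The hypothesis \(d\ge 2\) is not used beyond ensuring the statement is nontrivial, since \(T\) then has at least \(5\) vertices and the division is nondegenerate. No further structural input about \(T\) is required.
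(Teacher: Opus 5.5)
Your proof is correct and is exactly the direct derivation the paper has in mind when it calls the corollary an implication of Lemma~\ref{tree-division}: take a $(1/3,2/3)$-division and use integrality to get $|V(K)|,|V(H)|\ge\lceil (m-1)/3\rceil\ge d$. One small precision: $d\ge2$ is genuinely needed rather than just making the statement nontrivial. For $d=1$ the one-edge tree is a counterexample, and the division of Definition~\ref{tree-division-definition} does not exist when $m=2$. So the hypothesis is what keeps you in the range $m\ge5$, where Lemma~\ref{tree-division} applies.
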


\begin{lemma}\label{two-clique}
Let \(d\ge2\), \(t\ge3d-1\), and \(1\le s\le t-2d\). If \(A\) and \(B\) are cliques of order \(t-d\) with \(|V(A)\cap V(B)|=s\), then \(G[V(A)\cup V(B)]\) contains every tree on \(t\) vertices.
\end{lemma}

\begin{proof}
We use induction on \(t\), with \(d\) and \(s\) fixed. Put
\[
t_0=\max\{3d-1,2d+s\}.
\]
First consider \(t=t_0\).

If \(s\le d-1\), then \(t_0=3d-1\). By Lemma~\ref{tree-division}, let \(z\) and the two forests \(K,H\) form a \((1/3,2/3)\)-division of a $t$-vertex tree \(T\). Put \(a=|V(K)|\) and \(b=|V(H)|\). Since \(t=3d-1\), the bounds in Definition~\ref{tree-division-definition} give \(d\le a,b\le2d-2\) and \(a+b=3d-2\). The private part of either clique has order
\[
c=t_0-d-s=2d-1-s=2d-2-(s-1).
\]
Move \(s-1\) additional vertices from the two forests into a separator set \(S\) containing \(z\), choosing at least \(\max\{0,a-c\}\le s-1\) vertices from \(K\) and at least \(\max\{0,b-c\}\le s-1\) from \(H\). This is possible: if both $a-c$ and $b-c$ are positive, their sum is \(2s-d\le s-1\), and if only one is positive, it is at most \(s-1\). Every component of \(T-S\) remains in one of the two forests, and the number of vertices assigned to either side is at most \(c\). Map \(S\) into \(V(A)\cap V(B)\), and map the remaining vertices of \(K\) and \(H\) into \(V(A)\setminus V(B)\) and \(V(B)\setminus V(A)\), respectively, which embeds $T$ into \(G[V(A)\cup V(B)]\).

If \(s\ge d\), then \(t_0=2d+s\). By Corollary \ref{separation}, there are disjoint \(d\)-sets \(X,Y\subseteq V(T)\) with no edge between them. Map \(X\) and \(Y\) into the two private parts, each of order \(d\), and map the remaining \(s\) vertices into \(V(A)\cap V(B)\). Since there is no edge between \(X\) and \(Y\), this is an embedding of $T$ into \(G[V(A)\cup V(B)]\).

Now let \(t>t_0\), and let \(p\) be a leaf of \(T\). Choose private vertices \(a\in V(A)\setminus V(B)\) and \(b\in V(B)\setminus V(A)\), and put \(A_0=A-a\) and \(B_0=B-b\). These are cliques of order \((t-1)-d\) with intersection size \(s\), and \(t-1\ge t_0\). By induction, \(T-p\) can be embedded into \(A_0\cup B_0\). If the neighbor of \(p\) is mapped into \(A_0\), map \(p\) to \(a\); otherwise map it to \(b\). This completes the induction.
\end{proof}

We now use Lemma~\ref{two-clique} to describe how large cliques can intersect.

\begin{lemma}\label{large-overlap}
Let \(d\ge2\) and \(t\ge 3d-1\). Let \(T_t\) be a \(t\)-vertex tree, and let \(G\) be \(T_t\)-free. If \(K\) and \(K'\) are distinct intersecting \((t-d)\)-cliques in \(G\), then
\[
t-2d+1\le |V(K)\cap V(K')|\le t-d-1.
\]
\end{lemma}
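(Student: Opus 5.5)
The plan is to take the two inequalities separately. The upper bound is immediate. Two distinct cliques of the same order $t-d$ cannot share all $t-d$ vertices, since a clique is determined by its vertex set in $G$. Hence $|V(K)\cap V(K')|\le t-d-1$.

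For the lower bound, put $s=|V(K)\cap V(K')|$. Because $K$ and $K'$ intersect, we have $s\ge1$. Suppose, for contradiction, that $s\le t-2d$. Then $1\le s\le t-2d$, and together with $d\ge2$ and $t\ge 3d-1$ this is exactly the hypothesis of Lemma~\ref{two-clique} with $A=K$ and $B=K'$. That lemma says $G[V(K)\cup V(K')]$ contains every tree on $t$ vertices, so in particular it contains $T_t$. This contradicts the assumption that $G$ is $T_t$-free, and therefore $s\ge t-2d+1$.

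Since Lemma~\ref{two-clique} does all the work, there is essentially no obstacle. The only point to check is that the range of $s$ in that lemma begins at $1$, which is exactly what the intersecting hypothesis provides. It also helps to note that the resulting interval is nonempty, i.e.\ $t-2d+1\le t-d-1$, which holds because $d\ge2$.
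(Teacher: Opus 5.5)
Your proposal is correct and is the paper's own proof. The upper bound follows from $K\ne K'$. For the lower bound, any $s=|V(K)\cap V(K')|$ with $1\le s\le t-2d$ falls under Lemma~\ref{two-clique}, which forces a copy of $T_t$, a contradiction.
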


\begin{proof}
Put \(s=|V(K)\cap V(K')|\). If \(1\le s\le t-2d\), then Lemma~\ref{two-clique} applies and shows that \(G[V(K)\cup V(K')]\) contains every \(t\)-vertex tree, a contradiction. Hence \(s\ge t-2d+1\). The upper bound follows from \(K\ne K'\).
\end{proof}

\begin{lemma}\label{large-transitivity}
Let \(\cl\) be the family of \((t-d)\)-cliques in a \(T_t\)-free graph, where \(d\ge2\) and \(t\ge3d-1\). Then the relation
\[
R\sim R'
\quad\Longleftrightarrow\quad
V(R)\cap V(R')\ne\emptyset
\]
is an equivalence relation on \(\cl\).
\end{lemma}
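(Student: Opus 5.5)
Reflexivity and symmetry are immediate: every \((t-d)\)-clique has a nonempty vertex set, since \(t-d\ge 2d-1\ge 3\), and the intersection condition is symmetric in \(R\) and \(R'\). The only content is transitivity. The plan is a pigeonhole count inside the middle clique, driven by the overlap bound of Lemma~\ref{large-overlap}.

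Take \(R,R',R''\in\cl\) with \(R\sim R'\) and \(R'\sim R''\); we must show \(V(R)\cap V(R'')\ne\emptyset\). First dispose of degenerate cases.
\begin{itemize}
\item If \(R=R'\), then \(R\sim R''\) is just the hypothesis \(R'\sim R''\).
\item If \(R'=R''\), then \(R\sim R''\) is just the hypothesis \(R\sim R'\).
\item If \(R=R''\), then \(V(R)\cap V(R'')=V(R)\ne\emptyset\).
\end{itemize}
So we may assume \(R\ne R'\) and \(R'\ne R''\).

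Now apply Lemma~\ref{large-overlap}, which is available because \(d\ge2\), \(t\ge 3d-1\) and the graph is \(T_t\)-free. It gives
\[
|V(R)\cap V(R')|\ge t-2d+1
\qquad\text{and}\qquad
|V(R'')\cap V(R')|\ge t-2d+1 .
\]
Both sets lie inside \(V(R')\), which has exactly \(t-d\) vertices. By inclusion–exclusion inside \(V(R')\),
\[
|V(R)\cap V(R')\cap V(R'')|\ \ge\ 2(t-2d+1)-(t-d)\ =\ t-3d+2\ \ge\ 1,
\]
where the last inequality uses \(t\ge 3d-1\). Hence \(V(R)\cap V(R'')\ne\emptyset\), which is transitivity.

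There is no serious obstacle here. All the difficulty is contained in Lemma~\ref{large-overlap}, and through it in the embedding argument of Lemma~\ref{two-clique}. The only care needed is to exclude the coincident-clique cases before invoking Lemma~\ref{large-overlap}, since that lemma assumes the two cliques are distinct. One should also check that the hypothesis \(t\ge 3d-1\) is exactly what makes the count \(t-3d+2\) positive.
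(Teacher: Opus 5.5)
Your proof is correct and is essentially the paper's argument. Both apply Lemma~\ref{large-overlap} to the two pairs meeting the middle clique and count inside its $t-d$ vertices; the paper argues by contradiction ($t-d\ge 2(t-2d+1)$ forces $t\le 3d-2$), while you compute the triple intersection directly, and your explicit exclusion of coincident cliques, which Lemma~\ref{large-overlap} requires, is a detail the paper leaves implicit.
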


\begin{proof}
Only transitivity needs proof. Suppose, for a contradiction, that \(R_1\sim R_2\), \(R_2\sim R_3\), and \(V(R_1)\cap V(R_3)=\emptyset\). By Lemma~\ref{large-overlap},
\[
\begin{aligned}
t-d=|V(R_2)|
&\ge |V(R_1)\cap V(R_2)|+|V(R_2)\cap V(R_3)|\\
&\ge2(t-2d+1).
\end{aligned}
\]
Thus \(t\le3d-2\), contradicting \(t\ge3d-1\).
\end{proof}

\begin{lemma}\label{pg}
Let \(G\) be a graph in which every edge is contained in an \(r\)-clique. Let \(\cl\) be the set of all \(r\)-cliques in \(G\), and define a relation on \(\cl\) by \(R\sim R'\) if and only if \(V(R)\cap V(R')\ne\emptyset\). If \(\sim\) is an equivalence relation with classes \(\cl_1,\ldots,\cl_s\), then
\[
G[V(\cl_1)],\ldots,G[V(\cl_s)]
\]
are precisely the nontrivial components of \(G\). Here \(V(\cl_i)\) is the union of the vertex sets of the cliques in \(\cl_i\).
\end{lemma}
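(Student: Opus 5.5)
The plan is to verify three properties of the sets \(V_i:=V(\cl_i)\): they are pairwise disjoint, each \(G[V_i]\) is connected, and no edge of \(G\) joins \(V_i\) to anything outside \(V_i\). Together with the fact that every non-isolated vertex lies in some \(V_i\), these show that the \(G[V_i]\) are exactly the nontrivial components. We also need \(|V_i|\ge2\) so that each \(G[V_i]\) is nontrivial; this holds because \(r\ge2\) is implicit, since an \(r\)-clique containing an edge has at least two vertices.

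\emph{Disjointness.} Suppose \(v\in V_i\cap V_j\). Then \(v\in V(R)\) for some \(R\in\cl_i\) and \(v\in V(R')\) for some \(R'\in\cl_j\). Hence \(V(R)\cap V(R')\ne\emptyset\), so \(R\sim R'\), and therefore \(i=j\).

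\emph{Connectivity.} Let \(u,w\in V_i\), with \(u\in V(R)\) and \(w\in V(R')\) for some \(R,R'\in\cl_i\). Since \(R\sim R'\), the two cliques share a vertex \(x\). Each clique induces a complete, hence connected, subgraph, so \(u\)--\(x\)--\(w\) is a walk in \(G[V(R)\cup V(R')]\subseteq G[V_i]\). Note that the equivalence relation lets us pass directly from \(R\) to \(R'\) without any chaining argument.

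\emph{Closure.} Let \(uv\in E(G)\) with \(u\in V_i\). By hypothesis, \(uv\) lies in some \(r\)-clique \(Q\). Choose \(R\in\cl_i\) with \(u\in V(R)\). Then \(u\in V(Q)\cap V(R)\), so \(Q\sim R\) and \(Q\in\cl_i\). Consequently \(v\in V(Q)\subseteq V_i\). Hence each \(G[V_i]\) is a union of components, and combined with connectivity it is a single component.

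\emph{Covering.} Any vertex with a neighbor lies on an edge, that edge lies in an \(r\)-clique, and so the vertex lies in some \(V_i\). Therefore every nontrivial component contains a vertex of some \(V_i\), and by the closure step that component equals \(G[V_i]\).

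None of these steps is a real obstacle. The only point that needs care is the closure step, which is the one place where the hypothesis that every edge lies in an \(r\)-clique is used.
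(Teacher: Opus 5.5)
Your proof is correct and follows the same route as the paper's. Both use pairwise intersection within a class for connectivity of $G[V(\cl_i)]$, disjointness of distinct classes, and the hypothesis that every edge lies in an $r$-clique to show that no edge leaves $V(\cl_i)$. Your closure and covering steps make explicit what the paper leaves implicit, since the paper only excludes edges between $V(\cl_i)$ and $V(\cl_j)$ for $i\ne j$.

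The condition $r\ge2$ you flag is a genuine hidden hypothesis rather than a consequence of the others: for $r=1$ and an edgeless graph the statement fails. It is harmless here, since the paper applies the lemma with $r=t-d\ge3$.
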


\begin{proof}
Within one equivalence class, any two cliques intersect, so \(G[V(\cl_i)]\) is connected. Distinct classes have disjoint vertex sets. If an edge \(xy\) joined \(V(\cl_i)\) to \(V(\cl_j)\), then \(xy\) would lie in an \(r\)-clique \(R\). The clique \(R\) would intersect a clique in \(\cl_i\) and a clique in \(\cl_j\), forcing \(i=j\). Thus no such edge exists.
\end{proof}

\begin{lemma}\label{component-bound}
Let $d\ge2$, let $T_t$ be a $t$-vertex tree, and put $r=t-d$. For a $T_t$-free graph $G$, let $G_r$ be the spanning subgraph of $G$ whose edges are precisely those contained in at least one copy of $K_r$. If
\[
\lambda(T_t)\le d-1
\qquad\text{and}\qquad
 t\ge d^2-d+3,
\]
then every nontrivial component of $G_r$ has order at most $t-1$.
\end{lemma}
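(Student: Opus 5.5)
We argue by contradiction. Let $C$ be a nontrivial component of $G_r$ with $|V(C)|\ge t$, and we aim to embed $T_t$ into $G[V(C)]$. First observe that $t\ge d^2-d+3\ge 3d-1$, because $d^2-4d+4=(d-2)^2\ge0$. So Lemmas~\ref{large-overlap} and~\ref{large-transitivity} apply to $G$. Every $r$-clique of $G$ is also an $r$-clique of $G_r$, and every edge of $G_r$ lies in one, so Lemma~\ref{pg} applies to $G_r$. It shows that $V(C)=V(\cl_i)$ for one intersection class $\cl_i$. Fix a clique $K\in\cl_i$ and write $W=V(C)\setminus V(K)$, so $|W|\ge d$. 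Every $w\in W$ lies in some $K'\in\cl_i$, and $K'$ meets $K$, so Lemma~\ref{large-overlap} gives $|N(w)\cap V(K)|\ge t-2d+1$. In other words, each vertex of $W$ misses at most $d-1$ vertices of the $(t-d)$-clique $K$. The embedding will use $K$ as a ``universal host'' for most of $T_t$ and place a few carefully chosen vertices of $T_t$ in $W$.

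The embedding plan is as follows. Choose vertices $x_1,\dots,x_k$ of $T_t$ (with $k\le d$, as small as possible) forming an independent set with few and well-spread neighbors. Map them to distinct $w_1,\dots,w_k\in W$, and map the remaining $t-k$ vertices of $T_t$ injectively into $V(K)$, possibly together with further $W$-vertices. For this to work, each neighbor $y$ of some $x_j$ must land in $\bigcap_{j:\,x_jy\in E(T_t)}N(w_j)\cap V(K)$. Vertices of $T_t$ in $V(K)$ are automatically adjacent to each other, so the only constraints are these neighborhood constraints. Since $|W|\ge d$ but $|V(K)|=t-d$, we need exactly $k=d$ vertices outside $K$ if only one clique is used. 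The natural candidates are leaves. If $x_1,\dots,x_d$ are leaves of $T_t$, then only their parents are constrained. The hypothesis $\lambda(T_t)\le d-1$ says that each parent $p$ carries at most $d-1$ of the chosen leaves. Hence $p$ must be mapped into a common neighborhood of at most $d-1$ vertices $w_j$, which has size at least
\[
t-d-(d-1)^2 .
\]
This is where $t\ge d^2-d+3$ should enter. Summing the defects over the at most $d$ parents, a greedy (or Hall-type) assignment of the parents should succeed precisely when the available room $t-d$ exceeds $(d-1)\cdot(\text{number of constraints})$ by a small margin. After the parents are placed, the remaining unconstrained tree vertices fill the rest of $V(K)$ arbitrarily.

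The main obstacle is trees with fewer than $d$ leaves, or trees whose leaves cluster on few parents. For these, the simple ``leaves into $W$'' scheme is unavailable. For the few-leaf case I would instead use vertices of degree~$2$ on long bare paths of $T_t$. A tree with fewer than $d$ leaves has fewer than $d-2$ branch vertices, so for $t$ of order $d^2$ it contains about $d$ pairwise nonadjacent degree-$2$ vertices whose neighbors are all distinct. Each such $x_j$ imposes two constraints, namely that both of its neighbors lie in $N(w_j)$. Since these neighbor sets are disjoint, a greedy count needs roughly $2d$ choices inside sets of size $\ge t-2d+1$, which is easily satisfied. Thus I expect a case split: enough leaves on distinct parents (handled with $\lambda(T_t)\le d-1$ and the counting above), versus enough suspended degree-$2$ vertices. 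The delicate part, which I would check most carefully, is the intermediate regime together with the exact bookkeeping showing that $t\ge d^2-d+3$ suffices in both cases. If the greedy argument is too lossy, I would replace it with Hall's theorem applied to the bipartite graph between the constrained tree vertices and $V(K)$.
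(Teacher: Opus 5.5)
Your overall strategy matches the paper's. You fix an $r$-clique $K$ in a component of order at least $t$, and use Lemma~\ref{large-overlap} to see that each outside vertex misses at most $d-1$ vertices of $K$. You then send $d$ vertices of $T_t$ outside and the rest into $K$, using leaves when there are many and degree-$2$ vertices otherwise. But the two steps you leave as ``should succeed'' are exactly where the argument, as you set it up, fails to close.

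\textbf{Leaf case.} Clustered leaves are not an obstacle for the leaf scheme, and your dichotomy (``$d$ leaves on distinct parents'' versus ``$d$ well-separated degree-$2$ vertices'') is not exhaustive. For $d=4$, $t=15$, take the path $c\,a\,b\,s_1$, join $c$ to $s_2$ and $s_3$, and hang three leaves on each $s_i$. Then $\lambda=3$, there are only three parents of leaves, and there are only two degree-$2$ vertices, which are adjacent.

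Moreover, your per-parent bound $t-d-(d-1)^2$, combined with a greedy count over up to $d$ parents, would need $t\ge d^2+1$. That is more than you have when $d\ge3$.

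The missing idea is a trade-off between the number of parents and the load on each. Split instead by $\ell\ge d$ versus $\ell<d$, and in the first case take \emph{any} $d$ leaves. Suppose they have $p$ distinct parents. Then $p\ge2$ because $\lambda(T_t)\le d-1$, and each parent $y$ carries at most $d-p+1$ of them. So the candidate set $B_y$ (the common $K$-neighbourhood of the images of its leaves) satisfies $|B_y|-p\ge t-d-(d-p+1)(d-1)-p=t-d^2+d-3+(p-2)(d-2)\ge0$. Hence the parents can be placed greedily on distinct vertices. The extreme case $p=2$ is precisely where $t\ge d^2-d+3$ is used.

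\textbf{Few-leaf case.} You propose $d$ degree-$2$ vertices with pairwise disjoint neighbourhoods and no leaves. This constrains $2d$ distinct vertices of $K$ into sets guaranteed only to have size $t-2d+1$, so greedy needs $t\ge 4d-1$. That fails for $d=3$, $t\in\{9,10\}$.

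Hall's theorem does not rescue it. For $t=9$ and $T_t=P_9$ (so $K=K_6$), any three degree-$2$ vertices at pairwise distance at least $3$ have six distinct neighbours, i.e.\ all of $V(K)$. But the three outside vertices may all be adjacent to the same four vertices of $K$.

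The paper avoids this by sending all $\ell$ leaves outside as well, together with only $d-\ell$ degree-$2$ vertices. These are chosen at distance at least $3$ from each other and from the leaves; the marking at the ends of the bare paths and the estimate $M\ge t-7\ell+8$ guarantee they exist. Then at most $p+2(d-\ell)\le 2d-\ell\le 2d-2$ vertices of $K$ are constrained. The parents are placed first using $|B_y|\ge p$. The remaining prescribed neighbours then fit greedily, because $t-2d+1\ge 2d-2$ follows from $t\ge d^2-d+3$ via $(d-2)(d-3)\ge0$.
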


\begin{proof}
Since
\[
(d^2-d+3)-(3d-1)=(d-2)^2\ge0,
\]
we have $t\ge3d-1$. By Lemma~\ref{large-transitivity}, intersection is therefore an equivalence relation on the family of $r$-cliques of $G_r$, and Lemma~\ref{pg} identifies the corresponding equivalence classes with the nontrivial components of $G_r$.

Suppose, for a contradiction, that a nontrivial component $C$ has at least $t$ vertices. Fix an $r$-clique $R$ in $C$ and choose distinct vertices
\[
u_1,\ldots,u_d\in V(C)\setminus V(R).
\]
Every vertex of $C$ lies in an $r$-clique from the equivalence class corresponding to $C$. Hence, for each $i\in[d]$, there is an $r$-clique $R_i$ containing $u_i$ and intersecting $R$. By Lemma~\ref{large-overlap},
\begin{equation}\label{component-Ai}
A_i:=N_R(u_i),
\qquad
|A_i|\ge |V(R_i)\cap V(R)|\ge t-2d+1=r-d+1.
\end{equation}
We shall embed $T_t$ into $G[V(R)\cup\{u_1,\ldots,u_d\}]$.

Let $\ell$ be the number of leaves of $T_t$.

\smallskip
\noindent\emph{Case 1: $\ell\ge d$.}

Choose any $d$ leaves, denote them by $v_1,\ldots,v_d$, map $v_i$ to $u_i$, and let $P$ be the set of their distinct parents. Put $p=|P|$. Since $\lambda(T_t)\le d-1$, we have $p\ge2$. For $y\in P$, define
\[
I(y)=\{i\in[d]:y\text{ is the parent of }v_i\},
\qquad
B_y=\bigcap_{i\in I(y)}A_i.
\]
Since the positive integers $|I(y)|$, $y\in P$, sum to $d$, we have $|I(y)|\le d-p+1$. By~\eqref{component-Ai},
\[
|B_y|\ge r-|I(y)|(d-1)
\ge t-d-(d-p+1)(d-1).
\]
Moreover,
\[
\begin{aligned}
|B_y|-p
&\ge t-d^2-d+1+p(d-2).
\end{aligned}
\]
Since $p\ge2$ and $t\ge d^2-d+3$, the right-hand side is at least
\[
t-d^2+d-3\ge0.
\]
Hence $|B_y|\ge p$ for every $y\in P$. Thus,  there are distinct vertices $b_y\in B_y$. Map each parent $y$ to $b_y$ and extend the map to a bijection from $V(T_t)\setminus\{v_1,\ldots,v_d\}$ onto $V(R)$. All tree edges are preserved, yielding a copy of $T_t$, a contradiction.

\smallskip
\noindent\emph{Case 2: $\ell<d$.}

Since $l\ge 2$, we have $d\ge3$. Let $b$ be the number of vertices of degree at least $3$ and let $h$ be the number of vertices whose degree is not $2$. The identity
\[
\ell=2+\sum_{v:\,d_{T_t}(v)\ge3}\bigl(d_{T_t}(v)-2\bigr)
\]
implies $b\le\ell-2$, and hence
\[
h=\ell+b\le2\ell-2.
\]
Suppress all degree-$2$ vertices. The resulting skeleton is a tree on $h$ vertices, whose $h-1$ edges correspond to the maximal bare paths of $T_t$.

On every maximal bare path, mark the degree-$2$ vertex closest to each end whenever it exists; if an end is a leaf, also mark the second degree-$2$ vertex from that end whenever it exists. Thus at most $2(h-1)+\ell$ degree-$2$ vertices are marked. If $M$ denotes the number of unmarked degree-$2$ vertices, then
\[
M\ge t-h-2(h-1)-\ell=t-3h+2-\ell\ge t-7\ell+8.
\]
From each remaining path choose every third unmarked vertex. The marking at the ends guarantees that choices from distinct bare paths are also at distance at least $3$, and no chosen vertex is within distance $2$ of a leaf. Moreover,
\[
\begin{aligned}
M-[3(d-\ell)-2]
&\ge t-3d-4\ell+10\\
&\ge(d^2-d+3)-3d-4(d-1)+10\\
&=d^2-8d+17=(d-4)^2+1>0.
\end{aligned}
\]
Thus we can choose $d-\ell$ degree-$2$ vertices $x_1,\ldots,x_{d-\ell}$ which are pairwise at distance at least $3$, and whose neighbors are all distinct and are different from all parents of leaves.

Map all $\ell$ leaves and the vertices $x_1,\ldots,x_{d-\ell}$ bijectively to $u_1,\ldots,u_d$. Re-indexing if necessary, suppose the leaves are mapped to $u_1,\ldots,u_\ell$. Let $P$ be the set of distinct parents of the leaves and put $p=|P|$. For $y\in P$, define
\[
I(y)=\{i\in[\ell]:y\text{ is the parent of the leaf mapped to }u_i\},
\qquad B_y=\bigcap_{i\in I(y)}A_i.
\]
Since $|I(y)|\le\ell-p+1$ and $\ell\le d-1$,
\[
\begin{aligned}
|B_y|-p
&\ge t-d-(\ell-p+1)(d-1)-p\\
&\ge t-d-(d-p)(d-1)-p\\
&\ge p(d-2)-d+3\ge0.
\end{aligned}
\]
For each selected degree-$2$ vertex mapped to $u_j$, prescribe the candidate set $A_j$ for each of its two neighbors. The total number of prescribed neighbor-images is
\[
p+2(d-\ell)\le2d-\ell\le2d-2.
\]
On the other hand, by~\eqref{component-Ai},
\[
|A_j|\ge t-2d+1\ge2d-2,
\]
because
\[
t-2d+1-(2d-2)\ge d^2-5d+6=(d-2)(d-3)\ge0.
\]
Therefore all parents of leaves and all neighbors of the selected degree-$2$ vertices can be mapped to distinct vertices of $R$ in their prescribed candidate sets. Extend this map to a bijection from the remaining $t-d$ vertices of $T_t$ onto $V(R)$. Again all tree edges are preserved, giving a copy of $T_t$, a contradiction.

Both cases are impossible, so every nontrivial component of $G_r$ has order at most $t-1$.
\end{proof}

\subsection{Proofs of Theorems~\ref{leaf-bunch-counting-theorem}, \ref{leaf-bunch-spectral-theorem} and  \ref{T5}}\label{counting-proof}

We first prove the discrete optimization lemma used in the counting argument.

\begin{lemma}\label{opt}
Let \(M>r\ge2\), let \(n=\alpha M+\beta\) with \(\alpha\ge1\) and \(0\le\beta<M\), and let \(n_1,\ldots,n_k\) be positive integers such that \(n_i\le M\), each \(n_i\) is either \(1\) or at least \(r\), and \(\sum_i n_i\le n\). Then
\[
\sum_{i=1}^k\binom{n_i}{r}\le \alpha\binom Mr+\binom\beta r.
\]
Equality holds only when the parts consist of \(\alpha\) copies of \(M\), together with one part of size \(\beta\) if \(\beta\ge r\), or \(\beta\) copies of $1$ if \(\beta<r\).
\end{lemma}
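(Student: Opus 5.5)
The proof will be a local-exchange argument. I fix a configuration $(n_1,\dots,n_k)$ that maximizes $\sum_i\binom{n_i}{r}$ under the constraints, and I use the strict convexity of $f(x)=\binom{x}{r}$ on the integers $x\ge r-1$. Parts of size $1$ contribute nothing, so only the parts of size at least $r$ matter. Let $q$ be the number of such \emph{large} parts and $L\le n$ their total size.

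The key step is an exchange for two large parts $r\le n_i\le n_j<M$. Replacing them by $n_i-1$ and $n_j+1$ changes the sum by
\[
\binom{n_j}{r-1}-\binom{n_i-1}{r-1}>0,
\]
which is strictly positive because $n_j>n_i-1\ge r-1$. If $n_i=r$, the new part $n_i-1=r-1$ is not allowed, so it must be split into $r-1$ singletons. This keeps the total size and all constraints intact, and the gain is still strictly positive. Hence in a maximizer at most one large part has size strictly between $r-1$ and $M$, and all other large parts equal $M$. The same move, applied to any configuration that is not of this form, strictly increases the sum, so every non-maximizer of this shape is strictly suboptimal. This is what gives the equality characterization.

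Now the maximizer has the form $m$ parts equal to $M$ plus possibly one part $x$ with $r\le x<M$, where $mM+x\le n$. I also use monotonicity: enlarging a part, or turning singletons or unused size into a part, never decreases the sum. The function $f$ is strictly increasing for $x\ge r$, and a part of size $x\ge r$ beats singletons. So if $m<\alpha$, the part $x$ can be increased (or a new part created) until it reaches $M$, strictly increasing the sum. Thus $m=\alpha$, the leftover satisfies $x\le n-\alpha M=\beta$, and the best choice is $x=\beta$ when $\beta\ge r$, or no extra large part when $\beta<r$.

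The one delicate case is $m=\alpha-1$ with the remaining size split in a way that seems to block the exchange. In that case $x\le n-(\alpha-1)M=M+\beta$ while also $x<M$, so moving size into $x$ until it equals $M$ is still possible, because $\alpha M\le n$. So that case is fine as well.

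The main obstacle is the bookkeeping around the forbidden sizes $2,\dots,r-1$. Each exchange must keep every part either equal to $1$ or at least $r$, which is why I split a part of size $r-1$ into singletons. I also need strictness at every step so that the uniqueness claim follows: the extremal multiset is $\alpha$ copies of $M$, plus $\beta$ if $\beta\ge r$, and otherwise the remainder is made of singletons. When $\beta<r$ and $\sum n_i<n$, equality forces the large parts to be exactly $\alpha$ copies of $M$. The singletons may then number at most $\beta$, and I read the statement's "$\beta$ copies of $1$" as covering the case where the parts fill $n$.
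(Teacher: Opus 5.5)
Your proof is correct and is essentially the paper's argument: both rest on the discrete convexity of $\binom{x}{r}$ for $x\ge r-1$. The paper merges two nontrivial parts $p\ge q$ with $p<M$ in one step, justified by exactly your unit-transfer gain $\binom{x}{r-1}-\binom{S-x-1}{r-1}>0$, whereas you move one vertex at a time. Your explicit absorption of singletons or unused size into the short part, and your remark that for $\beta<r$ and $\sum_i n_i<n$ equality allows fewer than $\beta$ singletons, make precise two points that the paper handles only by padding with singletons to assume $\sum_i n_i=n$.
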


\begin{proof}
Set \(f(k)=\binom{k}{r}\), with \(f(k)=0\) for \(0\le k<r\). Adding unused vertices as singleton parts does not change the sum, so assume \(\sum_i n_i=n\).

For fixed \(S\), the function \(f(x)+f(S-x)\) is strictly nondecreasing for \(x\ge \max\{S/2,r-1\}\), because
\[
[f(x+1)+f(S-x-1)]-[f(x)+f(S-x)]
=\binom{x}{r-1}-\binom{S-x-1}{r-1}>0.
\]
Consequently, if two nontrivial parts have sizes \(p\ge q\) and \(p<M\), replacing them by \(p+q\) when \(p+q\le M\), or by \(M\) and \(p+q-M\) when \(p+q>M\), increases the sum. A remainder between \(1\) and \(r-1\) is represented by singleton parts. Repeating this operation produces \(\alpha\) parts of size \(M\) and the remainder \(\beta\). The inequality follows.

Whenever this operation is applied to two nontrivial parts and the larger part is smaller than \(M\), the displayed difference is positive. Hence equality is possible only for the stated extremal configurations.
\end{proof}

\begin{lemma}\label{bounded-degree-clique-equality}
Let $M\ge r\ge3$, and let $H$ be a graph on $m$ vertices with $\Delta(H)\le M-1$ such that every edge of $H$ is contained in an $r$-clique. Write
\[
m=aM+b,
\qquad a\ge0,
\qquad 0\le b\le M-1.
\]
Then
\[
|\mathcal C_r(H)|
\le a\binom Mr+\binom br.
\]
Moreover, equality holds if and only if the nontrivial components of $H$ are
\[
aK_M\dunion K_b
\]
when $b\ge r$, and are exactly $a$ copies of $K_M$ when $b<r$; in the latter case the remaining $b$ vertices are isolated.
\end{lemma}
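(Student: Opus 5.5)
The plan is to derive Lemma~\ref{bounded-degree-clique-equality} from the sharp bounded-maximum-degree clique theorem, the Gan--Loh--Sudakov conjecture proved by Chase~\cite{chase2020} and, for all $r\ge3$, by Chao and Dong~\cite{Chao2022}. A graph with $\Delta(H)\le M-1$ is exactly an $S_{M+1}$-free graph. That theorem says that for $r\ge3$ and $m=aM+b$ with $0\le b\le M-1$, every $m$-vertex graph of maximum degree at most $M-1$ has at most $a\binom Mr+\binom br$ copies of $K_r$. This gives the inequality at once. The hypothesis that every edge lies in an $r$-clique is not needed for the bound; it is used only to pin down the extremal graphs.

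For the case $a=0$, i.e.\ $m\le M-1$, I will argue directly. The bound $|\mathcal C_r(H)|\le\binom mr$ is trivial. If $m\ge r$, equality forces every $r$-subset to be a clique, so $H=K_m=K_b$. If $m<r$, there are no $r$-cliques, so no edge can lie in one. Hence $H$ is edgeless, which matches the claimed description with $a=0$.

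For $a\ge1$ and equality, I will rely on the equality part of the Chase/Chao--Dong theorem. For $r\ge3$ it states that equality holds only for $aK_M\dunion K_b$ when $b\ge r$. When $b<r$, it allows $aK_M$ together with an arbitrary graph $H'$ on the remaining $b$ vertices, of maximum degree at most $M-1$. In the second case, any edge of $H'$ would have to lie in an $r$-clique of $H$. Each $K_M$ already has all its vertices at degree $M-1$, so these copies are components and receive no further edges. Thus such an $r$-clique would lie inside $H'$, which has only $b<r$ vertices, a contradiction. So the remaining $b$ vertices are isolated. Conversely, the listed graphs clearly satisfy the hypotheses and attain the bound.

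The main obstacle is to make sure the cited equality characterization is stated in the needed form, in particular that it covers $b<r$ with an arbitrary remainder. If only the bound is available, my fallback is to reprove equality by a component-wise reduction. Each $K_M$ subgraph is a component, because its vertices already have degree $M-1$. I would then remove such components and use the strict inequalities in the known proofs to show that any other configuration loses cliques. I expect this fallback to be the delicate part, and I would try citing the equality statement first.
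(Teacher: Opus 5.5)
Your inequality step matches the paper: both cite Chase and Chao--Dong, using that $\Delta(H)\le M-1$ means $S_{M+1}$-free. Your case $a=0$ and your deduction for $b<r$ from a general characterization are also correct.

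The gap is the equality characterization for $a\ge1$, which is the only part of the lemma not covered by citation. You propose to cite an equality statement from Chase or Chao--Dong. The paper cites those works only for the bound, and explicitly has to derive the equality case itself. More importantly, the general statement you want to cite is equivalent to the lemma being proved. That statement covers arbitrary graphs with $\Delta\le M-1$, with an arbitrary remainder graph when $b<r$. Applying the lemma to the subgraph of edges lying in $r$-cliques recovers it, and your own argument gives the converse. So, as written, your proof assumes what it is supposed to prove.

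Your fallback does not close the gap. Observing that any copy of $K_M$ is automatically a component says nothing about an extremal graph that contains no $K_M$ at all, or whose components are not cliques. The phrase ``use the strict inequalities in the known proofs'' does not say which inequality must be tight, or why its tightness forces structure.

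The missing idea is to show that equality forces every neighbourhood to be complete. The paper argues as follows.
\begin{enumerate}[(1)]
\item Set $F(x)=q\binom Mr+\binom{\gamma}{r}$ for $x=qM+\gamma$. The convexity from Lemma~\ref{opt} gives $F(m-k)+\binom kr\le F(m)$ for $1\le k\le M$.
\item Let $\mathcal T_v$ be the set of $r$-cliques meeting $N_H[v]$. Chao--Dong's local double-counting inequality states $\sum_v|\mathcal T_v|\le\sum_v\binom{d_H(v)+1}{r}$.
\item Suppose some $v$ had $|\mathcal T_v|<\binom{d_H(v)+1}{r}$, and put $k=d_H(v)+1\le M$. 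Deleting $N_H[v]$ and applying the bound to $H-N_H[v]$ gives $|\mathcal C_r(H)|<F(m-k)+\binom kr\le F(m)$, contradicting equality. So every term is at least its counterpart, and the local inequality is tight.
\item By Chao--Dong's proof, tightness forces $c_r(v)=\binom{d_H(v)}{r-1}$ for all $v$. Since $r\ge3$, each $N_H(v)$ is then complete.
\item Hence each nontrivial component is a clique of order between $r$ and $M$. Lemma~\ref{opt}, with the case $M=r$ treated separately, then fixes the component orders.
\end{enumerate}
Without an argument of this kind, your proposal establishes only the inequality and the case $a=0$.
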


\begin{proof}
The inequality is proved by Chao and Dong~\cite{Chao2022}. We prove the equality statement from the argument of Chao and Dong~\cite{Chao2022}.

For an integer $x\ge0$, write $x=qM+\gamma$ with $0\le \gamma<M$ and set
\[
F(x)=q\binom Mr+\binom{\gamma}{r}.
\]
The same discrete convexity used in Lemma~\ref{opt} gives, for every $1\le k\le M$,
\begin{equation}\label{bounded-degree-convex-step}
F(m-k)+\binom kr\le F(m).
\end{equation}
Indeed, if $k\le b$, this reduces to
\[
\binom{b-k}{r}+\binom kr\le\binom br,
\]
while if $k>b$, it reduces to
\[
\binom kr+\binom{M+b-k}{r}
\le
\binom Mr+\binom br.
\]

Suppose now that equality holds in the stated clique bound. For $v\in V(H)$, let $\mathcal T_v$ be the family of $r$-cliques meeting the closed neighborhood $N_H[v]$. Chao and Dong~\cite{Chao2022} proved the local double-counting inequality (Lemma 2 in \cite{Chao2022})
\begin{equation}\label{Chao-Dong-local-sum}
\sum_{v\in V(H)}|\mathcal T_v|
\le
\sum_{v\in V(H)}\binom{d_H(v)+1}{r}.
\end{equation}
If~\eqref{Chao-Dong-local-sum} were strict, then some vertex $v$ would satisfy
\[
|\mathcal T_v|<\binom{d_H(v)+1}{r}.
\]
Putting $k=d_H(v)+1\le M$ and applying the bounded-degree clique theorem to $H-N_H[v]$, we would obtain, by~\eqref{bounded-degree-convex-step},
\[
\begin{aligned}
|\mathcal C_r(H)|
&=|\mathcal C_r(H-N_H[v])|+|\mathcal T_v|\\
&<F(m-k)+\binom kr\\
&\le F(m),
\end{aligned}
\]
a contradiction. Hence equality holds in~\eqref{Chao-Dong-local-sum}.

The proof of Lemma 2 in ~\eqref{Chao-Dong-local-sum}  implies that equality in~\eqref{Chao-Dong-local-sum} forces
\begin{equation}\label{bounded-degree-local-equality}
c_r(v)=\binom{d_H(v)}{r-1}
\qquad\text{for every }v\in V(H).
\end{equation}

Let $v$ be nonisolated. Since every edge of $H$ lies in an $r$-clique, the vertex $v$ lies in an $r$-clique and hence $d_H(v)\ge r-1$. Equality~\eqref{bounded-degree-local-equality} says that every $(r-1)$-subset of $N_H(v)$ forms a clique. Since $r\ge3$, this implies that $N_H(v)$ is complete. Consequently every nontrivial component of $H$ is complete. The maximum-degree condition shows that every such component has order at most $M$, and the $r$-clique-supported assumption shows that every nontrivial component has order at least $r$.

If $a=0$, the conclusion is immediate. Suppose $a\ge1$. If $M=r$, then every nontrivial component has order exactly $M$. Since $b<M=r$, the right-hand side of the clique bound equals $a$, while each nontrivial component contributes exactly one $r$-clique. Thus equality forces exactly $a$ components of order $M$, and the remaining $b$ vertices are isolated. We may therefore assume $M>r$. Applying Lemma~\ref{opt} to the orders of the complete nontrivial components, together with singleton parts for the isolated vertices, its equality statement yields exactly $a$ components of order $M$ and, when $b\ge r$, one further component of order $b$; when $b<r$, the remaining $b$ vertices are isolated.
\end{proof}

\begin{proof}[Proof of Theorems~\ref{leaf-bunch-counting-theorem} and~\ref{leaf-bunch-spectral-theorem}]
Let $G$ be an $n$-vertex $T_t$-free graph and put $d=t-r$. Let $v$ be a vertex contained in an $r$-clique. If $d_G(v)\ge t-1$, choose $d$ leaves with a common parent $p$ in $T_t$, embed the remaining $r$ vertices of $T_t$ into an $r$-clique containing $v$ with $p$ mapped to $v$, and map the deleted leaves to $d$ distinct neighbors of $v$ outside the clique. This produces a copy of $T_t$, a contradiction. Hence every vertex lying in an $r$-clique has degree at most $t-2$.

For the clique-counting statement, write $n=\alpha(t-1)+\beta$ as in Theorem~\ref{leaf-bunch-counting-theorem}, and let $G_r$ be the spanning subgraph of $G$ whose edges are precisely those contained in at least one $r$-clique. Then $\mathcal C_r(G_r)=\mathcal C_r(G)$ and $\Delta(G_r)\le t-2$. The Gan--Loh--Sudakov conjecture, proved by Chase~\cite{chase2020} and also by Chao and Dong~\cite{Chao2022}, gives
\[
|\mathcal C_r(G)|=|\mathcal C_r(G_r)|
\le \alpha\binom{t-1}{r}+\binom{\beta}{r},
\]
which gives the upper bound in~\eqref{leaf-bunch-counting}. Since $\alpha K_{t-1}\dunion K_\beta$ is $T_t$-free and has exactly
\[
\alpha\binom{t-1}{r}+\binom{\beta}{r}
\]
copies of $K_r$, equality in~\eqref{leaf-bunch-counting} follows. It remains to characterize the extremal graphs. Suppose that $G$ attains equality in~\eqref{leaf-bunch-counting}. By construction, every edge of $G_r$ lies in an $r$-clique. Applying Lemma~\ref{bounded-degree-clique-equality} with $M=t-1$ shows that, if $\beta\ge r$, then the nontrivial components of $G_r$ are $\alpha$ copies of $K_{t-1}$ together with one copy of $K_\beta$, whereas if $\beta<r$, the nontrivial components of $G_r$ are exactly $\alpha$ copies of $K_{t-1}$ and the remaining $\beta$ vertices are isolated in $G_r$. By Lemma~\ref{isolation}, every such $K_{t-1}$ is a component of the original graph $G$. Hence, when $\beta\ge r$, no further edge can be added and $G=\alpha K_{t-1}\dunion K_\beta$; when $\beta<r$, the remaining $\beta$ vertices may induce an arbitrary graph. Conversely, each graph of the stated form is $T_t$-free and attains equality. This proves Theorem~\ref{leaf-bunch-counting-theorem}.

For the clique-spectral statement, every vertex $v$ contained in an $r$-clique satisfies
\[
c_r(v)\le\binom{t-2}{r-1}
\]
since $d_G(v)\le t-2$.
Thus~\eqref{local-clique-degree} gives~\eqref{leaf-bunch-criterion-bound}. If equality holds, then some vertex $v$ satisfies $c_r(v)=\binom{t-2}{r-1}$. Consequently $d_G(v)=t-2$ and every $(r-1)$-subset of $N_G(v)$ is a clique. Since $r\ge3$, the neighborhood $N_G(v)$ is complete, so $N_G[v]=K_{t-1}$; Lemma~\ref{isolation} makes it a component. The converse is immediate. This proves Theorem~\ref{leaf-bunch-spectral-theorem}.
\end{proof}

\begin{proof}[Proof of Theorem~\ref{T5}]
Put $r=t-d$ and $\lambda=\lambda(T_t)$, and let $G$ be an extremal $n$-vertex $T_t$-free graph.

If $\lambda\ge d$, then $\lambda(T_t)\ge t-r$, so Theorem~\ref{leaf-bunch-counting-theorem} gives exactly the asserted generalized Tur\'{a}n number and the stated characterization of the extremal graphs.

Assume now that $\lambda\le d-1$. Let $G_r$ be obtained from $G$ by deleting every edge contained in no $r$-clique. This operation preserves all copies of $K_r$. Since $t\ge d^2-d+3$, Lemma~\ref{component-bound} shows that every nontrivial component of $G_r$ has order at most $t-1$. Let their orders be $n_1,\ldots,n_k$. Each $n_i\ge r$, and therefore
\[
\begin{aligned}
|\mathcal C_r(G)|
&=|\mathcal C_r(G_r)|\\
&\le\sum_{i=1}^k\binom{n_i}{r}\\
&\le\alpha\binom{t-1}{r}+\binom{\beta}{r},
\end{aligned}
\]
where the last inequality follows from Lemma~\ref{opt}. The graph $\alpha K_{t-1}\dunion K_\beta$ is $T_t$-free and attains this bound.

Suppose equality holds. Then every nontrivial component of $G_r$ must be complete, and the equality statement in Lemma~\ref{opt} gives the component orders stated in the theorem. Lemma~\ref{isolation} shows that every copy of $K_{t-1}$ is a component of the original graph $G$. If $\beta<r$, the remaining $\beta$ vertices contain no counted clique and may induce an arbitrary graph. If $\beta\ge r$, they must induce $K_\beta$. This proves the theorem.
\end{proof}

\subsection{Clique-spectral proofs}\label{spectral-proofs}

We first prove two spectral lemmas used in the proof of Theorem~\ref{spectral-T5}.

\begin{lemma}\label{complete-spectral-bound}
Let $H$ be a graph on $m\ge r$ vertices. Then
\[
\rho_r(H)\le \rho_r(K_m)=\binom{m-1}{r-1}.
\]
Moreover, equality holds if and only if $H=K_m$.
\end{lemma}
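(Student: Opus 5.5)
The plan is to use the local bound \eqref{local-clique-degree} together with a direct evaluation for $K_m$. First I compute $\rho_r(K_m)$. Every vertex of $K_m$ lies in exactly $\binom{m-1}{r-1}$ cliques of order $r$, so \eqref{local-clique-degree} gives $\rho_r(K_m)\le\binom{m-1}{r-1}$. For the reverse inequality, plug the uniform vector $x_i=m^{-1/r}$ into \eqref{spectral-variational}. This gives
\[
r\binom mr\cdot\frac1m=\binom{m-1}{r-1},
\]
so $\rho_r(K_m)=\binom{m-1}{r-1}$.

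For a general $H$ on $m$ vertices, every vertex $v$ satisfies $c_r(v)\le\binom{d_H(v)}{r-1}\le\binom{m-1}{r-1}$. Applying \eqref{local-clique-degree} then yields $\rho_r(H)\le\binom{m-1}{r-1}$. (Alternatively, one could use monotonicity of \eqref{spectral-variational} under adding edges, since $H\subseteq K_m$.)

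The main step is the equality case. Suppose $H\ne K_m$, so some edge $uw$ of $K_m$ is missing from $H$. I will show the inequality is strict, working directly with the variational form. Let $x$ be a maximizer for $H$ in \eqref{spectral-variational}; it exists by compactness. Then
\[
\rho_r(H)=r\sum_{K\in\mathcal C_r(H)}\prod_{v\in K}x_v\le r\sum_{K\in\mathcal C_r(K_m)}\prod_{v\in K}x_v\le\rho_r(K_m).
\]
If equality holds throughout, two things follow. First, $x$ is also a maximizer for $K_m$. Second, every $r$-clique of $K_m$ containing both $u$ and $w$ has $\prod_{v\in K} x_v=0$, since such a clique is absent from $H$.

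So I need that any maximizer for $K_m$ has all coordinates positive. For that I use the AM--GM equality analysis. Averaging the AM--GM inequality over all $r$-subsets gives
\[
r\sum_{|S|=r}\prod_{v\in S}x_v\le\binom{m-1}{r-1}\sum_v x_v^r,
\]
with equality iff $x$ is constant on every $r$-subset. Since $m\ge r$ and the $r$-subsets cover all pairs (the case $m=r$ is trivial), equality forces $x$ to be constant. Hence the maximizer is the uniform positive vector. Then the cliques through $u$ and $w$ (which exist because $m\ge r\ge2$) contribute positively, which is a contradiction. Therefore equality forces $H=K_m$.

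The converse direction of the equality statement is trivial.
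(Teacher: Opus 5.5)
Your proof is correct and follows essentially the same route as the paper's. Like the paper, you bound the form for $H$ by the complete form $r e_r(x)$, show that $r e_r(x)\le\binom{m-1}{r-1}$ holds with equality only at the uniform (hence everywhere positive) vector, and conclude that a missing pair $uw$ forces strict inequality. The one difference is how you prove $r e_r(x)\le\binom{m-1}{r-1}$ and its equality case: you sum AM--GM over all $r$-subsets, which is the local bound \eqref{local-clique-degree} applied to $K_m$, and add the observation that $r$-subsets link every pair of vertices. The paper instead uses Maclaurin's inequality followed by the power-mean (H\"older) inequality, so yours is a slightly more elementary substitute that reuses an inequality already in the paper.
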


\begin{proof}
Let $x=(x_1,\ldots,x_m)\in\mathbb R_+^m$ satisfy $\sum_{i=1}^m x_i^r=1$, and let $e_r(x)=\sum_{1\le i_1<i_2<\ldots <i_r\le m}x_{i_1}\cdots x_{i_r}$
denote the $r$-th elementary symmetric function of $x_1,\ldots,x_m$. By~\eqref{spectral-variational},
\[
r\sum_{\{i_1,\ldots,i_r\}\in\mathcal C_r(H)}x_{i_1}\cdots x_{i_r}
\le r e_r(x).
\]
Maclaurin's inequality and H\"older's inequality give
\[
\left(\frac{e_r(x)}{\binom mr}\right)^{1/r}
\le \frac{x_1+\cdots+x_m}{m}
\le m^{-1/r}.
\]
Consequently,
\[
r e_r(x)\le \frac{r}{m}\binom mr=\binom{m-1}{r-1}.
\]
Taking the maximum over all such $x$ proves the desired upper bound. Equality is attained for $K_m$ by taking $x_1=\cdots=x_m=m^{-1/r}$, so
\[
\rho_r(K_m)=\binom{m-1}{r-1}.
\]
If equality holds for $H$, then equality must hold in H\"older's inequality, and hence all $x_i$ are equal and positive. Equality in the first displayed inequality then implies that every $r$-subset of $V(H)$ is an $r$-clique. Since $m\ge r\ge2$, this forces $H=K_m$.
\end{proof}

\begin{lemma}\label{spectral-component-max}
Let $r\ge2$, and let $H$ be a graph whose nontrivial components containing an $r$-clique are $C_1,\ldots,C_k$. If $H$ contains at least one $r$-clique, then
\[
\rho_r(H)=\max_{i\in[k]}\rho_r(C_i).
\]
\end{lemma}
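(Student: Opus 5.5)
The plan is to prove the two inequalities $\rho_r(H)\ge\max_i\rho_r(C_i)$ and $\rho_r(H)\le\max_i\rho_r(C_i)$ separately, using only the variational characterization~\eqref{spectral-variational}.

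First, observe that every $r$-clique of $H$ lies entirely inside one component. A component containing an $r$-clique has at least $r\ge2$ vertices, so it is nontrivial and hence is one of the $C_i$. Therefore $\mathcal C_r(H)$ is the disjoint union of the families $\mathcal C_r(C_i)$, $i\in[k]$, and $k\ge1$ because $H$ contains an $r$-clique.

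For the lower bound, fix $i$ and take a maximizer $y$ of~\eqref{spectral-variational} for $C_i$. Extend $y$ by zero to a vector $x$ on $V(H)$. Then $\sum_v x_v^r=1$, and the objective value of $x$ for $H$ equals that of $y$ for $C_i$, since all cliques of $H$ outside $C_i$ contribute zero. Hence $\rho_r(H)\ge\rho_r(C_i)$ for each $i$.

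For the upper bound, take any admissible $x\ge0$ for $H$. Put $s_i=\sum_{v\in V(C_i)}x_v^r$, so that $\sum_i s_i\le1$. If $s_i>0$, rescale the restriction $x|_{C_i}$ by $s_i^{-1/r}$ to obtain an admissible vector for $C_i$. By homogeneity of degree $r$,
\[
r\sum_{K\in\mathcal C_r(C_i)}\prod_{v\in K}x_v\le s_i\,\rho_r(C_i),
\]
and this inequality holds trivially when $s_i=0$. Summing over $i$ gives an objective value of at most $\sum_i s_i\rho_r(C_i)\le\max_i\rho_r(C_i)$. Taking the supremum over $x$ completes the argument.

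No step is a real obstacle. The only points needing care are the homogeneity rescaling and the bookkeeping that cliques never straddle components, including those components lying outside the list $C_1,\ldots,C_k$.
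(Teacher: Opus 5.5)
Your proposal is correct and takes essentially the same approach as the paper. For the upper bound you split the admissible vector's mass $s_i$ over the clique-containing components and use degree-$r$ homogeneity; for the lower bound you extend a maximizer on one component by zero. Your remark that no $r$-clique straddles two components, so components outside the list contribute nothing, is exactly the bookkeeping the paper leaves implicit.
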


\begin{proof}
Let $x\ge0$ be admissible in~\eqref{spectral-variational}, and put
\[
a_i=\sum_{v\in V(C_i)}x_v^r.
\]
After normalizing the restriction of $x$ to $C_i$, the contribution of $C_i$ to~\eqref{spectral-variational} is at most $a_i\rho_r(C_i)$. Vertices outside these components contribute nothing. Hence the total is at most
\[
\sum_i a_i\rho_r(C_i)\le \max_i\rho_r(C_i).
\]
The reverse inequality follows by taking a maximizing vector supported on one component.
\end{proof}

\begin{proof}[Proof of Theorem~\ref{spectral-T5}]
Put $r=t-d$ and $\lambda=\lambda(T_t)$. Notice that
\[
t\ge d^2-d+3\ge3d-1,
\]
and hence $r\ge3$.

If $\lambda\ge d$, Theorem~\ref{leaf-bunch-spectral-theorem} gives
\[
\rho_r(G)\le\binom{t-2}{r-1}.
\]
Its equality statement gives exactly the required characterization.

Assume now that $\lambda\le d-1$. Let $G_r$ be the spanning subgraph of $G$ whose edges are precisely those contained in at least one $r$-clique. Deleting edges contained in no $r$-clique does not change the $r$-clique tensor, so
\[
\rho_r(G)=\rho_r(G_r).
\]
If $G_r$ has no $r$-clique, then $\rho_r(G)=0$ and the result is immediate. Otherwise let $C_1,\ldots,C_k$ be the nontrivial components of $G_r$.  By Lemma~\ref{component-bound}, every nontrivial component of $G_r$ has order at most $t-1$. By Lemma~\ref{spectral-component-max} and Lemma~\ref{complete-spectral-bound},
\[
\rho_r(G)
=\max_i\rho_r(C_i)
\le\max_i\binom{|V(C_i)|-1}{r-1}
\le\binom{t-2}{r-1}.
\]
If equality holds, some $C_i$ has order $t-1$ and equality holds in Lemma~\ref{complete-spectral-bound}; hence $C_i=K_{t-1}$. Lemma~\ref{isolation} shows that this copy is a component of $G$.
 This completes the proof.
\end{proof}

\section{Concluding remarks and open problems}\label{concluding-remarks}

 Gan, Loh and Sudakov\cite{gan2015maximizing} showed that if Conjecture~\ref{C1} holds for  \(r\) and the star, then it holds for $r+1$ and the star. We conjecture that this holds for any tree.

\begin{conjecture}\label{C2}
If Conjecture~\ref{C1} holds for a given \(r\), then it also holds for \(r+1\).
\end{conjecture}

There is a more global way to place Theorem~\ref{CDT-spectral-Erdos-Sos} and our large-clique theorem (Theorem~\ref{spectral-T5}) in one framework. Let $\mathcal T_t$ denote the family of all trees on $t$ vertices.  For $2\le r\le t-1$, define
\begin{equation}\label{universal-spectral-extremal}
\Lambda_r(n,t)
=
\max\left\{
\rho_r(G): |V(G)|=n,\ \text{$G$ does not contain every member of $\mathcal T_t$}
\right\}.
\end{equation}
Equivalently, a graph is admissible in~\eqref{universal-spectral-extremal} if it misses at least one $t$-vertex tree. Thus $\Lambda_r(n,t)$ is the natural $r$-clique spectral analogue of the universal adjacency-spectral extremal problem considered by Cioab\u{a}, Desai and Tait.

For $r=2$, Theorem~\ref{CDT-spectral-Erdos-Sos} is equivalent to the following exact values, together with uniqueness of the extremal graph: for every fixed $k\ge2$ and all sufficiently large $n$,
\begin{align}
\Lambda_2(n,2k+2)&=\rho(S_{n,k}),\label{Lambda-CDT-even}\\
\Lambda_2(n,2k+3)&=\rho(S_{n,k}^{+}).\label{Lambda-CDT-odd}
\end{align}
At the other end, Theorems~\ref{spectral-T1} and~\ref{spectral-T5} give the solution for large clique orders. In particular, if $d=t-r\ge2$, $t\ge d^2-d+3$, then
\begin{equation}\label{Lambda-large-clique}
\Lambda_r(n,t)=\rho_r(K_{t-1})=\binom{t-2}{r-1}
\end{equation}
for every $n\ge t-1$; the same formula also holds for $r=t-1$ by Theorem~\ref{spectral-T1}. Moreover, Theorem~\ref{spectral-T5} characterizes all extremal admissible graphs in the range $d\ge2$.

The two constructions naturally occupy different ranges of clique orders. If $2\le r\le k+1$, fix $r-1$ vertices in the $K_k$-part of $S_{n,k}$. Together with each vertex of the independent set they form an $r$-clique. Substituting into~\eqref{spectral-variational} a vector which has value $r^{-1/r}$ on these $r-1$ core vertices and value $(r(n-k))^{-1/r}$ on the independent set gives
\begin{equation}\label{split-lower-bound}
\rho_r(S_{n,k})\ge (n-k)^{(r-1)/r}.
\end{equation}
The same lower bound holds for $S_{n,k}^{+}$. Hence, for fixed $k$ and $r\le k+1$, the clique-spectral radius of the split construction tends to infinity with $n$, whereas
\[
\rho_r(K_{t-1})=\binom{t-2}{r-1}
\]
is independent of $n$. On the other hand,
\[
\omega(S_{n,k})=k+1,
\qquad
\omega(S_{n,k}^{+})=k+2.
\]
Thus $S_{n,k}$ has no $r$-cliques for $r\ge k+2$. Moreover, $S_{n,k}^{+}$ has exactly one $(k+2)$-clique, so
\[
\rho_{k+2}(S_{n,k}^{+})=1<\binom{2k+1}{k+1}=\rho_{k+2}(K_{2k+2}),
\]
and it has no $r$-cliques for $r\ge k+3$. These observations lead to the following conjecture.

\begin{conjecture}[Clique-spectral extension of the Cioab\u{a}--Desai--Tait theorem]\label{universal-clique-spectral-conjecture}
Fix $k\ge2$ and an integer $r\ge2$. For all sufficiently large $n$, the following statements hold.
\begin{enumerate}[(i)]
\item For trees of even order $t=2k+2$,
\[
\Lambda_r(n,2k+2)=
\begin{cases}
\rho_r(S_{n,k}),&2\le r\le k+1,\\[1mm]
\rho_r(K_{2k+1})=\displaystyle\binom{2k}{r-1},&k+2\le r\le2k+1.
\end{cases}
\]
In the first range, $S_{n,k}$ is the unique extremal graph. In the second range, every extremal admissible graph contains $K_{2k+1}$ as a component.

\item For trees of odd order $t=2k+3$,
\[
\Lambda_r(n,2k+3)=
\begin{cases}
\rho_r(S_{n,k}^{+}),&2\le r\le k+1,\\[1mm]
\rho_r(K_{2k+2})=\displaystyle\binom{2k+1}{r-1},&k+2\le r\le2k+2.
\end{cases}
\]
In the first range, $S_{n,k}^{+}$ is the unique extremal graph. In the second range, every extremal admissible graph contains $K_{2k+2}$ as a component.
\end{enumerate}
\end{conjecture}

\section*{Acknowledgments}
We thank Daniel Gerbner for valuable communications. This research is supported in part by the National Natural Science Foundation of China (Nos.~12571363 and 12371327) and the Natural Science Foundation of Hunan Province (Grant No.~2025JJ30003).

\section*{Declaration on the use of generative AI}
During the preparation of this manuscript, the authors used GPT 5.5 to assist with English-language editing. The authors take full responsibility for the mathematical content.

\end{document}